\documentclass[12pt]{amsart}\usepackage{amsmath,amsthm,amsfonts,amssymb}
\usepackage{paralist,nicefrac}

\newcount\casecount
\newcommand{\case}[1]{\advance\casecount by 1\vskip .1in
    \goodbreak\noindent\textbf{Case \number\casecount:}\ #1\par\noindent}

\newcommand{\C}{\mathbb{C}}
\newcommand{\Z}{\mathbb{Z}}

\newcommand{\F}{\mathbb{F}}

\newcommand{\res}{\operatorname{res}}

\newcommand{\Stab}{\operatorname{Stab}}

\renewcommand{\sc}{\operatorname{sc}}
\newcommand{\rcc}{\operatorname{rcc}}

\newcommand{\Gal}{\operatorname{Gal}}

\newcommand{\Hom}{\operatorname{Hom}}
\newcommand{\Aut}{\operatorname{Aut}}

\newcommand{\ul}{\underline}

\newcommand{\Spin}{\mathrm{Spin}}

\newcommand{\SU}{\mathrm{SU}}
\newcommand{\PSU}{\mathrm{PSU}}

\newcommand{\SL}{\mathrm{SL}}
\newcommand{\PSL}{\mathrm{PSL}}
\newcommand{\Sp}{\mathrm{Sp}}


\newtheorem{thm}{Theorem}

\newtheorem{cor}[thm]{Corollary}

\newtheorem{prop}[thm]{Proposition}

\newtheorem{lem}[thm]{Lemma}

\newtheorem*{claim*}{Claim}
\begin{document}

\title{Beauville Surfaces and Finite Simple Groups}

\author{Shelly Garion}
\address{Max-Planck-Institut f\"ur Mathematik,
53111 Bonn, Germany}

\author{Michael Larsen}
\address{Department of Mathematics,
Indiana University,
Bloomington, IN 47405
U.S.A. }

\author{Alexander Lubotzky}
\address{Einstein Institute of Mathematics,
Edmond J. Safra Campus, Givat Ram,
The Hebrew University of Jerusalem,
Jerusalem, 91904
Israel }

\thanks{SG was supported by a European Postdoctoral Fellowship (EPDI)}
\thanks{ML was supported by grants from the National Science Foundation and the United States - Israel Binational Science Foundation.}
\thanks{AL was supported by grants from the European Research Council and the National Science Foundation.}

\subjclass[2000]{Primary 20D06; Secondary 14H30 14J10 20H10}
\maketitle

\begin{center}
\small
\textit{In memory of Fritz Grunewald, an inspiring mathematician and a dear friend}
\end{center}

\begin{abstract}
A Beauville surface is a rigid complex surface of the form $(C_1\times C_2)/G$,
where $C_1$ and $C_2$
are non-singular, projective, higher genus curves,
and $G$ is a finite group acting freely on the product.
Bauer, Catanese, and Grunewald conjectured that every finite simple group $G$,
with the exception of $A_5$, gives rise to such a surface. We prove that this is
so for almost all finite simple groups (i.e., with at most finitely many exceptions).
The proof makes use of the structure theory of finite simple groups,
probability theory, and character estimates.
\end{abstract}

\bigskip

Catanese \cite{Cat} defined a \emph{Beauville surface} to be
an infinitesimally rigid complex surface of the form $X := (C_1\times C_2)/G$,
where $C_1$ and $C_2$
are non-singular projective curves of genus $\ge 2$,
and $G$ is a finite group acting freely on the product.
Every $g\in G$ respects the product decomposition $C_1\times C_2$.
Let $G^0$ denote the subgroup of $G$ (of index $\le 2$) which preserves the
ordered pair $(C_1,C_2)$.  Any Beauville surface can be presented in such a
way that $G^0$ acts effectively on each factor.  Catanese called such a presentation
\emph{minimal} and proved that it is unique \cite[Proposition 3.13]{Cat}.
A Beauville surface is \emph{mixed} (resp. \emph{unmixed}) if
$[G:G^0]=2$ (resp. $G=G^0$).

There has been an effort to classify finite groups $G$ appearing in minimal presentations
of unmixed Beauville surfaces.
A finite group $G$ appears in this way if and only if
it admits an \emph{unmixed Beauville structure}.  This consists of
an ordered quadruple $(x_1,y_1,x_2,y_2)\in G$
such that each set $\{x_i,y_i\}$ generates $G$ and
\begin{equation*}
\Sigma(x_1,y_1)\cap \Sigma(x_2,y_2) = \{e\},
\end{equation*}
where $\Sigma(x,y)$ is the union of conjugacy classes of all powers of $x$, all powers of
$y$, and all powers of $xy$.
Bauer, Catanese, and Grunewald proved \cite[Theorem~7.16]{BCG}
that all sufficiently large alternating groups admit an unmixed Beauville structure and conjectured
\cite[Conjecture~7.17]{BCG} that all (non-abelian) finite simple groups except $A_5$ do so.
(The group $A_5$ is a genuine exception.)

Some progress has been made on this conjecture.  In \cite{FG} it was shown that it holds for alternating groups, i.e., that $A_n$ admits an unmixed Beauville structure for all $n\ge 6$.
Partial results for groups of the form $\PSL_2(q)$ and ${}^2B_2(2^{2f+1})$ were given in \cite{BCG}; complete results for the same two families together with  the Ree groups
${}^2G_2(3^{2f+1})$ were proved in \cite{FJ} and \cite{GP}.  Additionally, the groups of the form
$G_2(q)$ and ${}^3D_4(q)$ (in characteristic $p>3$) and $\PSL_3(q)$ and $\PSU_3(q)$
(all $q$) were treated in \cite{GP}.

The goal of the current paper is to prove the Bauer-Catanese-Grunewald conjecture for almost all finite simple groups.

\begin{thm}
\label{main}For every sufficiently large non-abelian finite simple group $G$, there exists a Beauville surface with minimal presentation $C_1\times C_2/G$.
\end{thm}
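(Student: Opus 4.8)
The plan is to translate Theorem~\ref{main} into a statement about abstract groups and then settle it family by family using the classification of finite simple groups. By the theory of minimal presentations recalled above, it is enough to prove that every sufficiently large non-abelian finite simple group $G$ admits an unmixed Beauville structure: a quadruple $(x_1,y_1,x_2,y_2)$ with $\langle x_i,y_i\rangle=G$ for $i=1,2$ and $\Sigma(x_1,y_1)\cap\Sigma(x_2,y_2)=\{e\}$. The alternating groups are covered by \cite{FG}, and there are only finitely many sporadic groups, so the essential case is $G$ of Lie type. I would separate these into two regimes demanding different tools: the bounded-rank groups, where the field size $q\to\infty$, and the unbounded-rank classical groups, where $q$ may stay small.

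Two reductions organize the argument. First, generation is cheap: a uniformly random pair in $G\times G$ generates $G$ with probability tending to $1$, so the burden falls entirely on the disjointness condition. Second, because conjugate elements have equal order, a clean sufficient condition for $\Sigma(x_1,y_1)\cap\Sigma(x_2,y_2)=\{e\}$ is that the product $\mathrm{ord}(x_1)\,\mathrm{ord}(y_1)\,\mathrm{ord}(x_1y_1)$ be coprime to $\mathrm{ord}(x_2)\,\mathrm{ord}(y_2)\,\mathrm{ord}(x_2y_2)$: under coprimality no order $>1$ can be shared between a power drawn from the first triple and a power drawn from the second, so the two spectra meet only in $e$. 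The task thus becomes the construction of two generating pairs whose six associated element orders split into coprime halves.

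For bounded rank I would exploit the abundance of maximal tori. Using the cyclotomic factorization of $|G|$, one can pick regular semisimple elements $x_i,y_i$ lying in tori whose orders are built from disjoint sets of primitive prime divisors of the numbers $q^d-1$, so that the orders of $x_1,y_1$ are automatically coprime to those of $x_2,y_2$. The delicate point is the order of each product $x_iy_i$, which is not directly at our disposal. Here the character estimates advertised in the abstract enter: to force $x_iy_i$ into a regular semisimple class of the desired prime support I would count solutions of $x_iy_i=g$ by the Frobenius formula $\tfrac{|x_i^G|\,|y_i^G|}{|G|}\sum_\chi \chi(x_i)\chi(y_i)\overline{\chi(g)}/\chi(1)$ and bound the nonprincipal contribution by uniform upper bounds on character ratios of Gluck--Liebeck--Shalev type, obtaining a positive count; discarding the few pairs that fall into proper subgroups, again controlled by character sums or by bounds on generation within maximal subgroups, then leaves a genuinely generating pair. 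For the unbounded-rank classical groups I would argue combinatorially instead, building $x_i,y_i$ from blocks acting on the natural module with prescribed, mutually coprime orders—the analogue for matrix groups of the coprime cycle-type permutations used for $A_n$ in \cite{FG}—and splitting the supporting primes between the two pairs.

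The main obstacle, as this outline makes clear, is the disjointness rather than the generation: one must simultaneously pin down the conjugacy classes of three interlocking elements $x$, $y$, and $xy$, together with all of their powers, and do so uniformly over every family and every rank. The product $xy$ is the crux, since its class is a consequence of the choices of $x$ and $y$ rather than a free parameter; realizing it in a torus or block pattern with the correct primes is precisely what forces the Frobenius count and the character bounds, and keeping those estimates effective simultaneously in $q$ and in the rank is the heart of the proof.
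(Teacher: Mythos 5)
Your overall frame is the paper's: reduce to unmixed Beauville structures, quote \cite{FG} for alternating groups, ignore the sporadics, and for groups of Lie type choose regular semisimple elements in two maximal tori whose element orders are coprime away from the center, then force $x_iy_i$ into the desired classes by a Frobenius-formula count and discard the pairs lying in maximal subgroups. Your coprimality criterion for $\Sigma(x_1,y_1)\cap\Sigma(x_2,y_2)=\{e\}$ is exactly how the paper proceeds: Proposition~\ref{pairs} demands $T_1\cap g^{-1}T_2g=Z(G)$, which is verified case by case by showing $(|T_1|,|T_2|)$ divides $|Z(G)|$ via resultants of cyclotomic polynomials, and the maximal-subgroup discard is carried out with the Liebeck--Martin--Shalev zeta function $\sum_{M\in\max G}[G:M]^{-s}\to 0$ together with the Landazuri--Seitz bound $m(G)\ge c_1q^r$. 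In bounded rank your outline is essentially a correct sketch of this argument.

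The genuine gap is the unbounded-rank regime, and your own last paragraph effectively concedes it. The combinatorial ``block'' construction is not a workable substitute: unlike $A_n$, where the cycle type of $xy$ can be prescribed directly, you have no device for pinning down the conjugacy class of $x_iy_i$ (and of all its powers) from block data, nor any route to generation, so you are driven back to the character-sum count --- at which point you need an upper bound on $|\chi(t)|$ at the chosen elements that is \emph{polynomial in the rank} $r$, because the trivial-character main term in the count is only of order $|G|^2r^{-6}$. The Gluck--Liebeck--Shalev-type \emph{ratio} bounds you invoke, of the shape $|\chi(t)|/\chi(1)\le cq^{-1/2}$, are far too weak here: since $\sum_\chi\chi(1)^2=|G|$, the resulting error term swamps the main term, and no off-the-shelf estimate gave absolute bounds at regular semisimple elements uniformly in $r$. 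Supplying this is precisely the paper's new contribution: the elementary bound of Theorem~\ref{soft}, $|\chi(a)|\le(4/\sqrt3)^n[N:A]$ for elements with abelian centralizer, combined with a case-by-case choice of tori whose stabilizers in $W$ are abelian, nearly cyclic, of order $O(r^2)$ --- so that $[N:T]=O(r^2)$ and $n=O(\log r)$, giving $|\chi(t)|\le c_2r^3$ --- after which the Liebeck--Shalev zeta function $\sum_{\chi\ne1}\chi(1)^{-3/4}\to0$ closes the count. Two further ingredients you omit: the lower bound (\ref{many-regular}) guaranteeing enough regular classes in each torus so the trivial character dominates, and the fact that the torus machinery is carried out only for standard Frobenius maps, so the Ree groups ${}^2F_4(2^{2f+1})$ require the separate argument of \S4 via Malle's classification of their maximal subgroups and Gow's theorem on products of regular semisimple classes.
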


Before saying something about the proof, let us put our result in a more general group-theoretic context.  The existence of an unmixed Beauville structure on $G$ amounts to the realization of
$G$ as a quotient of (hyperbolic) triangle groups in two
``independent'' ways.  The question of which finite simple groups are quotients of triangle groups has attracted a lot of attention in the group theory literature.  There has been special interest in the $(2,3,7)$-triangle group, also known as the Hurwitz group (see \cite{Con} and the references therein), but considerable attention has been paid to other triangle groups as well (see \cite{Mar} and the references therein).  Usually the question has been asked from the angle of characterizing finite simple quotients of a fixed triangle group.  Here the point of view is different: given a finite simple group, we will provide several (in fact, many) ``triangle'' generations of it.  In this context, one should mention the old conjecture of Higman, proved by Everitt \cite{Eve}, asserting that for every
hyperbolic triangle group $\Gamma$, all but finitely many alternating groups are quotients of $\Gamma$.  Of course, this gives Theorem~\ref{main} for almost all the alternating groups.  One should not be tempted to conjecture that given $\Gamma$, almost all finite simple groups are quotients of $\Gamma$.  In this connection, \cite{Mar} has some intriguing and suggestive results.

Taking an even broader perspective, one can study finite quotients and representations of Fuchsian groups (see, e.g., \cite{LiSh} and \cite {LaLu}), but usually triangle groups are the most difficult to deal with.

Let us now give a short description of the proof.  As mentioned above, the result is known for alternating groups, and we can ignore the finitely many sporadic groups.  By the classification of finite simple groups, we need only consider groups of Lie type.  For each such group which is  sufficiently large, we will present two maximal tori $T_1$ and $T_2$.  If $C_i$ denotes the set of all conjugates of elements of $T_i$, we ensure that $C_1\cap C_2 = \{e\}$, and that in each $C_i$ we can find $x_i$ and $y_i$ such that
\begin{itemize}
\item[(a)] $G=\langle x_i,y_i\rangle$,
\item[(b)] $x_i y_i \in C_i$.
\end{itemize}
To assure (a), we use some of the well developed theory bounding the indices and the number of maximal subgroups in $G$.  On the other hand, (b) can be proved by means of character estimates.  While it is quite likely that the estimates we need can be deduced from Lusztig's theory, a much softer and more elementary method is presented in \S1.  This method works for general finite groups and may have some independent interest and applications.

The paper is organized as follows.  In \S1, the new character estimate is presented.  In \S2, we state Proposition~\ref{pairs}, which claims that two tori exist with some desired properties.  We illustrate the proposition by proving it for groups of the form $\PSL_n(q)$ for all but finitely many
pairs $(n,q)$.  We then present Theorem~\ref{main} modulo Proposition~\ref{pairs}, leaving the detailed case analysis required to prove Proposition~\ref{pairs} in general to \S3.
The only groups not covered by prior work are now the Ree groups ${}^2F_4(2^{2f+1})$, which are discussed in \S4, using a different method which is applicable to groups whose maximal subgroup structure is well understood.

This paper is dedicated to the memory of Fritz Grunewald, who
has been influential in the area of Beauville surfaces by connecting it to
group theory \cite{BCG}. It has been typical of Fritz to serve as a
bridge between different areas of mathematics. His legacy as a mathematician
and as an outstanding personality will be remembered for many years.

\subsection*{Acknowledgment} The first-named author would like to thank Ingrid Bauer, Fabrizio Catanese and Fritz Grunewald for introducing her to the fascinating world of
Beauville structures and for many useful discussions. She is also
grateful to Bob Guralnick and Eugene Plotkin for many helpful discussions.

\section{Elementary Character Bounds for General Finite Groups}

Let $G$ be any finite group.  We say that $a\in G$ is \emph{abstractly regular} if the centralizer
$Z(a)$ of $a$ in $G$ is abelian.  We give an upper bound for the absolute
value of irreducible characters evaluated at abstractly regular elements.

\begin{lem}
\label{transp}
Let $A$ denote a maximal abelian subgroup of $G$ and $N = N(A)$ its normalizer in $G$.
If $a\in A$ is abstractly regular, then $g^{-1} a g\in A$ if and only if $g\in N$.
\end{lem}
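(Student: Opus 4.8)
The plan is to prove the trivial implication directly and to reduce the substantive one to a single order-counting observation. If $g\in N=N(A)$, then $g^{-1}Ag=A$ by definition of the normalizer, so in particular $g^{-1}ag\in A$. This is the easy half and uses nothing about abstract regularity.

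For the converse, the key preliminary step is to observe that $A$ is forced to coincide with the centralizer $Z(a)$. Since $A$ is abelian and $a\in A$, every element of $A$ commutes with $a$, so $A\subseteq Z(a)$. By hypothesis $a$ is abstractly regular, so $Z(a)$ is itself an \emph{abelian} subgroup containing $A$; maximality of $A$ among abelian subgroups then forces $A=Z(a)$. I regard establishing this identity as the conceptual heart of the argument — everything afterward is bookkeeping.

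Now suppose $g^{-1}ag\in A$ and set $b=g^{-1}ag$. Because conjugation is an automorphism of $G$, we have $Z(b)=g^{-1}Z(a)g=g^{-1}Ag$, using the identity $Z(a)=A$ just established. On the other hand $b\in A$ and $A$ is abelian, so $A\subseteq Z(b)=g^{-1}Ag$. Since conjugation preserves cardinality and $G$ is finite, $|A|=|g^{-1}Ag|$, and an inclusion $A\subseteq g^{-1}Ag$ between finite sets of equal size must be an equality. Hence $g^{-1}Ag=A$, that is, $g\in N$, as desired.

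The main (and essentially only) obstacle is spotting the identification $A=Z(a)$: the abstract-regularity hypothesis is exactly what makes $Z(a)$ abelian and thereby pins it down via the maximality of $A$. Once this is in place, finiteness of $G$ upgrades the inclusion $A\subseteq g^{-1}Ag$ to an equality, and the proof closes. The counting step could be phrased more generally — any self-injection of a finite set is a bijection — but the cardinality comparison above suffices.
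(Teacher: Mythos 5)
Your proof is correct and follows essentially the same route as the paper's: both establish $A=Z(a)$ from abstract regularity plus maximality of $A$, transport this under conjugation to get $g^{-1}Ag=Z(g^{-1}ag)\supseteq A$, and then use equality of cardinalities to conclude $g^{-1}Ag=A$. No gaps; the presentation matches the paper's argument step for step.
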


\begin{proof}
As  $A$ is abelian and $a\in A$, $A\subset Z(a)$.  As $a$ is abstractly regular, $Z(a)$ is abelian.
Since $A$ is maximal abelian, $A=Z(a)$.  Thus,
$$g^{-1} A g = g^{-1}Z(a)g = Z(g^{-1} a g).$$
If
$g^{-1} a g\in A$, then $Z(g^{-1} a g)$ contains $A$.  Since
$$|Z(g^{-1} a g)| = |Z(a) |= |A|,$$
we have
$$g^{-1} A g = g^{-1}Z(a)g = Z(g^{-1} a g) = A,$$
so $g\in N$.  The converse is trivial.
\end{proof}

\begin{thm}
\label{soft}
Let $A$ denote a maximal abelian subgroup of $G$.  Suppose that there
exist proper subgroups $A_1,\ldots,A_n$ of $A$ such that every element in
$$A^\circ:=A\setminus (A_1\cup A_2\cup \cdots \cup A_n)$$
is abstractly regular.  Let $N = N(A)$ denote the normalizer of $A$.  If $\chi$ is any irreducible
character of $G$, then
$$|\chi(a)| \le (4/\sqrt3)^n [N:A]$$
for all $a\in A^\circ$.
\end{thm}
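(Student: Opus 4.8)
The natural plan is to study $\chi|_A$ through its decomposition into linear characters of the abelian group $A$. Write $\chi|_A=\sum_{\psi}m_\psi\psi$, the sum over $\psi\in\hat A$, with $m_\psi=\langle\chi|_A,\psi\rangle_A\ge 0$. Since $A$ is normal in $N$ and $\chi$ is a class function on $G$, the restriction $\chi|_A$ is invariant under the conjugation action of $N$ on $A$, so the multiplicities $m_\psi$ are constant on the $N$-orbits in $\hat A$. For $a\in A^\circ$, the argument in the proof of Lemma~\ref{transp} gives $Z(a)=A$, whence the $G$-conjugates of $a$ lying in $A$ are exactly the $N$-conjugates $a^N$, a set of size $[N:A]$ on which $\chi$ is constant equal to $\chi(a)$; moreover distinct $N$-orbits in $A^\circ$ fall into distinct $G$-classes, each of the maximal size $|G|/|A|$. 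Thus $\chi(a)=\sum_\psi m_\psi\psi(a)$ is the Fourier transform at $a$ of the nonnegative, $N$-invariant function $\psi\mapsto m_\psi$, and the problem becomes a question in harmonic analysis on $A$.

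Next I would extract the two gains separately. The factor $[N:A]$ should come from the $N$-invariance together with the conjugacy structure above: grouping $\chi(a)=\sum_{O}m_O\bigl(\sum_{\psi\in O}\psi(a)\bigr)$ by $N$-orbits $O\subseteq\hat A$ turns each inner sum into a sum of $\psi$ over $a^N$, bounded in modulus by $|O|\le[N:A]$. The global identity $\langle\chi,\chi\rangle_G=1$, restricted to the regular classes meeting $A^\circ$ and using that these classes have size $|G|/|A|$, yields an $L^2$ bound of the shape $\sum_{a\in A^\circ}|\chi(a)|^2\lesssim|N|$, so the values are on average of size $[N:A]^{1/2}$ per orbit. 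The content of the theorem is the pointwise upper bound, which cannot follow from this $L^2$ information alone and must exploit the positivity of the $m_\psi$.

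The crux is the factor $(4/\sqrt3)^n$, which I expect to obtain by removing the excluded subgroups $A_1,\dots,A_n$ one at a time, each removal costing a single factor $4/\sqrt3$. For a fixed $a\in A^\circ$ and a given $A_i$, the hypothesis $a\notin A_i$ lets me choose a linear character $\lambda$ of $A$ that is trivial on $A_i$ but satisfies $\lambda(a)=\zeta\ne 1$; averaging the identity $\chi(a)=\sum_\psi m_\psi\psi(a)$ against a geometric progression in $\lambda$ replaces the contribution supported on $A_i$ by a partial geometric sum, controlled by $\bigl|\sum_{j}\zeta^{j}\bigr|\le 2/|1-\zeta|$, and for a suitably chosen root of unity one has $|1-\zeta|\ge\sqrt3$, giving $2/|1-\zeta|\le 2/\sqrt3$ and hence a factor $4/\sqrt3$ after accounting for the two ends of the progression. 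Iterating over $i=1,\dots,n$ and combining with the base estimate of the previous paragraph should give $|\chi(a)|\le(4/\sqrt3)^n[N:A]$. I expect the main obstacle to be precisely this inductive filtering: choosing the characters $\lambda$ uniformly so that successive removals do not interfere, checking that nonnegativity of the multiplicities is preserved at each stage, and pinning down the constant $4/\sqrt3$ from the worst-case geometric sum. The remaining points—the $N$-invariance of the $m_\psi$ and the conjugacy bookkeeping—are routine given Lemma~\ref{transp}.
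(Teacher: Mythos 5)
Your setup is genuinely aligned with the paper's in several respects: the restriction of $\chi$ to $A$, the $L^2$ inequality $\sum_{a\in A^\circ}|\chi(a)|^2\le |N|$ obtained from Lemma~\ref{transp}, and the choice, for each $A_i$, of a linear character trivial on $A_i$ whose value at $a$ is a root of unity $\zeta$ with $|1-\zeta|\ge\sqrt3$. But the two steps that actually close the argument are missing, and the routes you propose for them would fail. First, your derivation of the factor $[N:A]$ --- grouping $\chi(a)=\sum_{O}m_O\bigl(\sum_{\psi\in O}\psi(a)\bigr)$ and bounding each inner sum by $|O|\le[N:A]$ --- gives no pointwise bound at all, since you have no control on $\sum_O m_O$ (it is of order $\chi(1)$, which can be enormous). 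In the paper, $[N:A]$ arises differently: writing the relevant function on $A$ as $\sum_i c_i\psi_i$ with $\psi_i\in\Hom(A,\C^\times)$, one has $\sum_{a\in A}|\cdot|^2=|A|\sum_i c_i^2$, so dividing the $L^2$ bound $4^n|N|$ by $|A|$ yields $\sum_i c_i^2\le 4^n[N:A]$.

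Second, and more seriously, your diagnosis that the pointwise bound ``must exploit the positivity of the $m_\psi$,'' and your plan to check that ``nonnegativity of the multiplicities is preserved at each stage,'' point in the wrong direction: nonnegativity is destroyed immediately and the proof does not need it. The paper multiplies $\phi=\chi|_A$ pointwise by $\prod_{i=1}^n(\phi_i-1)$, where $\phi_i$ is trivial on $A_i$ and $|\phi_i(a)-1|\ge\sqrt3$; the resulting $\psi$ is a \emph{virtual} character (its Fourier coefficients $c_i$ are integers of arbitrary sign) which vanishes identically on $A_1\cup\cdots\cup A_n$, so that $\sum_{a\in A}|\psi(a)|^2\le 4^n\sum_{a\in A^\circ}|\chi(a)|^2\le 4^n|N|$, the $4^n$ coming from $|\phi_i-1|\le 2$ everywhere. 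The decisive step your sketch lacks --- and which positivity cannot replace --- is the integrality trick $|c_i|\le c_i^2$, valid because the $c_i$ are integers; it converts the $\ell^2$ bound into the $\ell^1$ bound $\sum_i|c_i|\le 4^n[N:A]$, hence the pointwise bound $|\psi(a)|\le 4^n[N:A]$, and dividing by $\prod_i|\phi_i(a)-1|\ge 3^{n/2}$ gives $|\chi(a)|\le(4/\sqrt3)^n[N:A]$. Your ``averaging against a geometric progression'' neither produces a function supported on $A^\circ$ (the paper's construction is a pointwise cutoff, not a Fourier-side average) nor supplies any substitute for the integrality step, so the inductive filtering you describe does not close.
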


\begin{proof}
Let $A^* = \Hom(A,\C^\times)$.  We identify $\Z A^*$ with the ring of characters of
virtual complex representations on $A$.  Let $\phi\in\Z A^*$ be the character associated
to the restriction of $\chi$ to $A$.  For each $i$ from $1$ to $n$, let $\phi_i\in A^*$ be
a character which is trivial on $A_i$ and non-trivial on $a$.
For every non-trivial root of unity $\omega$, there exists an integer $k$ such that
$|\omega^k-1|\ge \sqrt3$, so replacing $\phi_i$ by a character of the form $\phi_i^k$, we may assume
that $|\phi_i(a)-1|\ge \sqrt3$.
Let
$$\psi = \phi\prod_{i=1}^n (\phi_i - 1).$$

Let $g_1,\ldots,g_k$ denote a set of coset representatives for $G/N$.  By Lemma~\ref{transp},
distinct pairs $(g_i,a)$, with $a\in A^\circ$, give rise to distinct elements $g_i^{-1} a g_i$.
Therefore,
$$\sum_{a\in A^\circ} |\chi(a)|^2 =
\frac 1k \sum_{a\in A^\circ}\sum_{i=1}^k |\chi(g_i^{-1} a g_i)|^2 \le
\frac 1k\sum_{g\in G}|\chi(g)|^2 = \frac{|G|}k = |N|.$$
As $|\phi_i(a)-1|\le 2$ for all $i$,
$$\sum_{a\in A} |\psi(a)|^2 = \sum_{a\in A^\circ} |\psi(a)|^2
\le 4^n \sum_{a\in A^\circ} |\phi(a)|^2 = 4^n \sum_{a\in A^\circ} |\chi(a)|^2 \le 4^n |N|.$$

Writing $\psi$ as a linear combination $\sum c_i \psi_i$ of $\psi_i\in A^*$, we have
$$\sum_{a\in A} |\psi(a)|^2 = |A|\sum_i c_i^2.$$
Thus,
$$\sum_i c_i^2 \le 4^n [N:A].$$
As the $c_i$ are integers,
\begin{align*}|
\chi(a)| &= |\phi(a)|\le 3^{-n/2}|\psi(a)| = 3^{-n/2}\Bigm|\sum_i c_i \psi_i(a)\Bigm|
\le 3^{-n/2}\sum_i |c_i| \\
&\le 3^{-n/2}\sum c_i^2 \le (4/\sqrt3)^n [N:A].
\end{align*}

\end{proof}

\begin{cor}
\label{sl-tori}
Let $\chi$ be an irreducible character of $G = \SL_{r+1}(\F_q)$.  If $t_1\in G$ has an irreducible characteristic polynomial, then
\begin{equation}
\label{Singer}|
\chi(t_1)|  \le \frac{2(r+1)^2}{\sqrt3}.
\end{equation}
If $r\ge 2$ and the characteristic polynomial of $t_2\in G$ has an irreducible factor of degree $r$,  then
\begin{equation}
\label{other-SL}
|\chi(t_2)|  \le \frac{2r^2}{\sqrt3}.
\end{equation}
\end{cor}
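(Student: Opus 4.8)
The plan is to deduce both inequalities from Theorem~\ref{soft} by exhibiting, for each of $t_1$ and $t_2$, a maximal abelian subgroup $A$ containing it together with a short list of proper subgroups covering the non-regular locus. For $t_1$, having an irreducible characteristic polynomial means $t_1$ acts on $\F_q^{r+1}=\F_{q^{r+1}}$ as multiplication by a generator of $\F_{q^{r+1}}$ over $\F_q$. Its centralizer in $\GL_{r+1}(\F_q)$ is then $\F_{q^{r+1}}^\times$, so its centralizer $A=Z(t_1)$ in $G=\SL_{r+1}(\F_q)$ is the cyclic Singer torus $\{u\in\F_{q^{r+1}}^\times:\mathrm{Nm}(u)=1\}$ of order $(q^{r+1}-1)/(q-1)$; since $A=Z(t_1)$ is abelian and any abelian group containing $A$ centralizes $t_1$, $A$ is maximal abelian. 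The normalizer quotient $N(A)/A$ embeds in $\Gal(\F_{q^{r+1}}/\F_q)$, so $[N:A]\le r+1$.

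First I would identify the non-regular elements of $A$: an element $u\in A$ fails to be abstractly regular exactly when it lies in a proper subfield $\F_{q^d}$ with $d\mid(r+1)$, for then its centralizer contains a copy of $\GL_{(r+1)/d}(\F_{q^d})$ and is non-abelian. Every proper subfield lies in a maximal one $\F_{q^{(r+1)/p}}$ with $p$ a prime dividing $r+1$, so taking $A_i=A\cap\F_{q^{(r+1)/p_i}}^\times$, one subgroup per prime divisor $p_i$ of $r+1$, makes $A^\circ$ consist entirely of abstractly regular elements while still containing $t_1$. With $n$ equal to the number of distinct prime divisors of $r+1$, Theorem~\ref{soft} then yields $|\chi(t_1)|\le(4/\sqrt3)^n(r+1)$.

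For $t_2$ the characteristic polynomial splits as a degree-$r$ irreducible factor times a linear factor, so $t_2$ preserves a decomposition $\F_q^{r+1}=V\oplus L$ with $\dim V=r$, $t_2|_V$ generating $\F_{q^r}$, and $t_2|_L$ a scalar $\lambda\in\F_q^\times$. The centralizer in $\GL_{r+1}(\F_q)$ is $\F_{q^r}^\times\times\F_q^\times$, which is abelian, so the centralizer $A$ in $G$ is $\{(u,\lambda):\mathrm{Nm}(u)\lambda=1\}\cong\F_{q^r}^\times$ of order $q^r-1$, again maximal abelian, with $[N:A]\le r$. The subtle point here is that when $u$ generates $\F_{q^r}$ with $r\ge2$, its eigenvalues lie in $\F_{q^r}\setminus\F_q$ and hence never coincide with the $\F_q$-eigenvalue $\lambda$ on $L$; so abstract regularity again fails only when $u$ lies in a proper subfield, and covering these by one subgroup per prime divisor of $r$ gives $|\chi(t_2)|\le(4/\sqrt3)^n r$ with $n$ the number of distinct prime divisors of $r$.

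It remains to check that these bounds imply \eqref{Singer} and \eqref{other-SL}, i.e. that $(4/\sqrt3)^n\le 2m/\sqrt3$ whenever $m\in\{r+1,r\}$ has exactly $n$ distinct prime factors; equivalently $4^n\le 2m\,3^{(n-1)/2}$. This is where the real content lies: since $m$ is divisible by $n$ distinct primes, $m$ is at least the product of the first $n$ primes, which grows faster than $4^n$, so the inequality holds for all $m\ge 2$ (with equality at $n=1$, $m=2$). Verifying this arithmetic estimate — rather than the essentially routine centralizer and normalizer computations — is the main step, and it is precisely the mechanism by which the exponential factor $(4/\sqrt3)^n$ coming from Theorem~\ref{soft} is absorbed into the polynomial bounds claimed.
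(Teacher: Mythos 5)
Your proposal is correct and follows essentially the same route as the paper's proof: you identify the centralizers of $t_1$ and $t_2$ with the norm-one subgroup of $\F_{q^{r+1}}^\times$ and with $\F_{q^r}^\times$ respectively, verify they are maximal abelian with $[N:A]\le r+1$ (resp.\ $\le r$) via the Galois embedding, cover the non-regular locus by one subfield subgroup per prime divisor of $r+1$ (resp.\ $r$), and invoke Theorem~\ref{soft} together with the estimate $(4/\sqrt3)^n\le(2/\sqrt3)m$, exactly as the paper does (including your eigenvalue-separation remark for $t_2$, which the paper compresses into ``proceeds as before''). One caution on your final arithmetic step: the product of the first $n$ primes does \emph{not} dominate $4^n$ for $n\le 4$ (e.g.\ $30<64$ at $n=3$), so the clean justification is instead the factor-by-factor induction $4/\sqrt3=(2/\sqrt3)\cdot 2\le(2/\sqrt3)\,p_1$ and $4/\sqrt3<3\le p_i$ for $i\ge 2$, which yields $(4/\sqrt3)^n\le(2/\sqrt3)\,p_1\cdots p_n\le(2/\sqrt3)\,m$ — the inequality the paper asserts.
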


\begin{proof}
Let $P(x)$ denote the characteristic polynomial of $t_1$.  Every non-zero vector $v\in \F_q^{r+1}$
determines an isomorphism
$$[Q(x)] \mapsto Q(t_1)(v)$$
of $\F_q$-vector spaces from $\F_q[x]/(P(x)) \cong \F_{q^{r+1}}$ to $\F_q^{r+1}$.
Thus we can identity $t_1$ with multiplication by a generating element $\alpha$ of the extension
$\F_{q^{r+1}}$ of $\F_q$.  The centralizer $A$ of $t_1$ consists of determinant~$1$ $\F_q$-linear automorphisms of $\F_{q^{r+1}}$ which commute with multiplication by $\alpha$ and therefore multiplication by all scalars in $\F_{q^{r+1}}$.  Thus $A =  \ker \F_{q^{r+1}}^\times \to \F_q^\times$.
Any element of the normalizer $N$ corresponds to an $\F_q$-linear operator
$T$ on $\F_{q^{r+1}}$ such that for some $\beta\in \F_{q^{r+1}}$, $T(\alpha x) = \beta T(x)$
for all $x\in \F_{q^{r+1}}$.  If $Q(x)$ is a polynomial in $\F_q[x]$, $Q(\alpha) = 0$ implies $Q(\beta) = 0$, so $\alpha\mapsto \beta$ defines an element of $\Gal(\F_{q^{r+1}}/\F_q)\cong \Z/(r+1)\Z$.
If the automorphism is trivial, then $T$ corresponds to an element in $Z(A) = A$.
Thus $[N:A] \le r+1$.

On the other hand, any element in $A$ which corresponds to a generator of the field
$\F_{q^{r+1}}$ over $\F_q$ has centralizer $A$, so we may define $F_i$ to be the $i$th
maximal proper subfield of $\F_{q^{r+1}}$ over $\F_q$ and $A_i$ to denote the elements of
$A$ which can be regarded as elements of $F_i$.  The number $n$ of such maximal subfields is the number of distinct prime factors of $r+1$.  Writing
$$r+1 = p_1^{a_1}\cdots p_n^{a_n},$$
with $2\le p_1 < p_2 < \cdots < p_n$, we have
$$(4/\sqrt3)^n \le \frac 2{\sqrt3}  \cdot (r+1).$$
Theorem~\ref{soft} now implies (\ref{Singer}).

If $r\ge 2$ and the characteristic polynomial factors as $P(x) (x-a)$, we can identify
$\F_q^{r+1}$ with $\F_{q^r}\times \F_q$ and the centralizer of $t_2$ with
multiplication operators $(\beta, N(\beta)^{-1})$ on this algebra,
where $\beta\in \F_{q^r}^\times$
and $N$ denotes the norm map to $\F_q$.  The proof now proceeds as before.

\end{proof}

At this point, the reader who is primarily interested in the case $\SL_n(\F_q)$ can proceed directly to \S2.  Here we generalize these examples, replacing $\SL_n$ by any simply connected simple algebraic group $\ul G$.  Our reference for
the theory of such groups is \cite{Car}.

Let $\F_q$ be a finite field and $\ul G$
a simply connected simple algebraic group $\ul G$ over $\F_q$.
Let $G = \ul G(\F_q)$.  It is often useful to think of $\ul G(\F_q)$ as $\ul G(\bar\F_q)^F$,
By the classification of (non-abelian) finite simple groups, with finitely many exceptions,
each such group is an alternating group, a Suzuki or Ree group, or a quotient of
$\ul G(\F_q)$ by its center, for some $\ul G$.
With finitely many exceptions, $G$ determines $\ul G$ and $\F_q$.
Let $\ul B$ be a
Borel subgroup of $\ul G$ and $\ul T_0$ a maximal torus of $\ul B$ defined over $\F_q$.
Let $\ul N$ denote the normalizer of $\ul T_0$ and let $W := \ul N/\ul T_0$ denote
the Weyl group of $\ul G$
with respect to $\ul T_0$.   Thus $F$ acts on $W$ compatibly with its action on the character and cocharacter groups $X$ and $Y$ of $\ul T_0$.
Note that $W^F$ is never trivial, since $\ul B$ defines an $F$-stable longest root $\alpha$,
and the reflection $s_\alpha$ is therefore $F$-invariant.
The group $\ul G$ is determined up to isomorphism by its Dynkin diagram, the prime power $q$,
and the order $m$ of $F$ viewed as an automorphism of the Dynkin diagram of $\ul G$.

Let $W' := W\rtimes \langle \Z/m\Z\rangle$, where $1$ acts on $W$ by $F$.
There is a bijective correspondence
between $G$-conjugacy classes of $\F_q$-rational maximal tori in $\ul G$ and
$F$-conjugacy classes in $W$, i.e., $W$-orbits under the $W$-action $z.w := z^{-1} w F(z)$.
There is also a bijection between $F$-conjugacy classes in $W$ and $W$-orbits in
the coset $W\rtimes 1\subset W'$.
For each $\F_q$-rational torus $\ul T$ of $\ul G$, we write $T = \ul T(\F_q)$ and denote
by $T^{\rcc}$ the set of $G$-conjugacy classes of regular semisimple elements of $T$.

\begin{prop}
\label{rss-bound}
Let $[w']$ denote a $W$-orbit in $W\rtimes 1$ and $\ul T'$ an $\F_q$-rational
maximal torus of $\ul G$
in the class of tori associated with $[w']$.  Let $T' := \ul T'(\F_q)$.  Suppose that $t'\in T'$
is regular semisimple and $\chi$ is an irreducible character of $G$.  Then
$$|\chi(t')| \le (4/\sqrt3)^n |\Stab_W(w')|,$$
where $n$ denotes the number of subgroups of prime order in $\Stab_W(w')$.
\end{prop}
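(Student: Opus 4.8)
The plan is to apply Theorem~\ref{soft} to the maximal abelian subgroup $A:=T'$, taking the proper subgroups $A_1,\dots,A_n$ to be the fixed-point subgroups of the prime-order subgroups of the relative Weyl group; the two factors in the resulting bound $(4/\sqrt3)^n[N:A]$ will then match the two factors in the assertion. First I would verify that $A=T'$ is maximal abelian. Since $t'$ is regular semisimple and $\ul G$ is simply connected, its centralizer $C_{\ul G}(t')$ is the connected group $\ul T'$, so $Z(t')=C_{\ul G}(t')^F=\ul T'^F=T'$. Any abelian subgroup of $G$ containing $T'$ centralizes $t'\in T'$ and hence lies in $Z(t')=T'$; thus $A=T'$ is maximal abelian, and its normalizer in $G$ is $N=N_G(\ul T')$.

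By the correspondence between $\F_q$-rational maximal tori and $F$-conjugacy classes in $W$ used to set up the statement, the relative Weyl group $\bar N:=N/A=N_G(\ul T')/T'$ is isomorphic to the twisted stabilizer $\{z\in W : z^{-1}wF(z)=w\}=\Stab_W(w')$, so $[N:A]=|\Stab_W(w')|$, which supplies the second factor. Here $\bar N$ acts faithfully on $A=T'$, because $C_{\ul G}(\ul T')=\ul T'$ forces any $g\in N$ centralizing $T'$ to lie in $T'$; hence for each nontrivial $\bar g\in\bar N$ the fixed subgroup $A_{\bar g}:=\{t\in T' : gtg^{-1}=t\}$ is a proper subgroup of $A$.

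Next I would identify the non-abstractly-regular elements of $A$. If $t\in A$ is fixed by some nontrivial $\bar g$, then $\langle T',g\rangle\subseteq Z(t)$ is non-abelian (as $\bar g$ acts nontrivially on $T'$ by faithfulness), so $t$ is not abstractly regular. Conversely, if $t$ is not abstractly regular, then $Z(t)=C_{\ul G}(t)^F$ is non-abelian, which forces the connected, $F$-stable, reductive group $C_{\ul G}(t)$ to contain $\ul T'$ properly; I would then produce from $C_{\ul G}(t)$ a nontrivial $F$-rational element of its Weyl group fixing $t$, for instance the longest element of $W(C_{\ul G}(t),\ul T')$ taken with respect to an $F$-stable positive system, which is automatically $F$-invariant and lifts into $\bar N$. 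Thus the non-abstractly-regular elements of $A$ are precisely those fixed by some nontrivial element of $\bar N$.

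Finally I would reduce to prime order and conclude. Since $t\in A_{\bar g}$ implies $t\in A_{\bar g^k}$, and every nontrivial element of $\bar N$ has a prime-order power, the union $\bigcup_{\bar g\ne 1}A_{\bar g}$ equals the union of the subgroups $A_P:=\{t\in T' : ptp^{-1}=t\ \text{for all}\ p\in P\}$ over the $n$ subgroups $P$ of prime order in $\bar N\cong\Stab_W(w')$. Setting $\{A_1,\dots,A_n\}=\{A_P\}$, the complement $A^\circ=A\setminus(A_1\cup\cdots\cup A_n)$ consists exactly of abstractly regular elements, and $t'\in A^\circ$ since $Z(t')=T'$ is abelian and so $t'$ is fixed by no nontrivial $\bar g$. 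Theorem~\ref{soft} then gives $|\chi(t')|\le(4/\sqrt3)^n[N:A]=(4/\sqrt3)^n|\Stab_W(w')|$. The main obstacle is the converse direction of the previous paragraph: producing a \emph{rational} nontrivial Weyl element of $C_{\ul G}(t)$ fixing a non-regular $t$, since only elements of $\bar N=\Stab_W(w')$ are available; this is the general analogue of the bookkeeping of maximal subfields (equivalently, distinct prime divisors of $r+1$) in the proof of Corollary~\ref{sl-tori}.
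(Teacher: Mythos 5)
Your architecture is exactly the paper's: take $A = T'$, which is maximal abelian because Steinberg's connectedness theorem for the simply connected $\ul G$ gives $Z_G(t') = \ul T'{}^F = T'$; identify $[N:A]$ with $|\Stab_W(w')|$ via Carter 3.6.5; take the $A_i$ to be the fixed-point subgroups of the prime-order subgroups of $N_G(T')/T'$; check that everything outside their union is abstractly regular; and invoke Theorem~\ref{soft}. All of this matches the paper's proof, including the small verifications you make explicitly (faithfulness of the action of $N/T'$ on $T'$, properness of the $A_i$, and $t'\in A^\circ$) which the paper leaves implicit.

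The genuine gap is the step you yourself flag: producing, for a non-regular semisimple $t\in T'$, a nontrivial \emph{rational} element of the relative Weyl group centralizing $t$. Your proposed device --- the longest element of $W(\ul H,\ul T')$, $\ul H := Z_{\ul G}(t)$, with respect to an $F$-stable positive system --- fails because an $F$-stable positive system generally does not exist: $F$ acts on $X(\ul T')\otimes\R$ as $q\tau$ with $\tau$ a finite-order isometry permuting $\Phi(\ul H,\ul T')$, and since $W_H$ acts simply transitively on positive systems, $\tau$ stabilizes one only when $\ul T'$ is quasi-split in $\ul H$ --- and the tori used in this paper are deliberately far from quasi-split. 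Already for $\ul G=\SL_2$, $T'$ the nonsplit torus and $t=-I$ (with $q$ odd), $\tau=-1$ swaps $\pm\alpha$, so no positive system is stable, even though the needed conclusion holds because $s_\alpha$ is still $F$-fixed. The paper instead quotes Carter 3.5.4 to see that $W(\ul H,\ul T')$ is a nontrivial subgroup of $W(\ul G,\ul T')$ and then asserts the existence of a nontrivial $F$-stable element; to make that assertion rigorous, apply to $\ul H$ the same identification the paper quotes from Carter for $\ul G$: the relative Weyl group $N_{\ul H^F}(T')/T'$ is isomorphic to the twisted stabilizer $\{x\in W_H : x^{-1}w_H F(x)=w_H\}$. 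This is nontrivial whenever $W_H\neq 1$, because twisted conjugation preserves the sign character of $W_H$ (the $F$-action sends reflections to reflections), so there are at least two twisted classes, every twisted class is a proper subset of $W_H$, and orbit--stabilizer makes every twisted stabilizer nontrivial. Any lift of such an element to $N_{\ul H^F}(T')$ lies in $Z_G(t)$, hence centralizes $t$ automatically, and a prime-order power places $t$ in one of your $A_P$. With that substitution for your positive-system argument, your proof closes and coincides with the paper's.
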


\begin{proof}
As $T'$ contains a regular semisimple element, $\ul T'$ is a non-degenerate torus in the sense
of \cite{Car}.  Let $\ul N'$ denote the normalizer of $\ul T'$.
By \cite[3.6.5]{Car}, there exists an isomorphism $\ul N'/\ul T'\cong W$
mapping $N_G(T')/T'$ to $\Stab_W(w')$.  In particular,
the index of $T'$ in its normalizer equals $|\Stab_W(w')|$.
Let $\langle x_1\rangle,\ldots,\langle x_n\rangle$ denote the elements
of prime order in $N_G(T')/T'$.  Let
$y_i$ denote an element of $N_G(T')$ representing the class $x_i$, and let $A_i\subset T'$
denote the intersection $T'\cap Z_G(y_i)$.
As $\ul G$ is simply connected, Steinberg's theorem \cite[3.5.6]{Car} implies $T' = Z_G(a)$
for every regular semisimple element of $T'$.
We claim that every element $a$ of
$$T' \setminus (A_1\cup \cdots\cup A_n)$$
is regular semisimple and therefore regular.  By \cite[3.5.4]{Car}, if $a$ is not regular, the centralizer $\ul H$ of $a$ in $\ul G$ is a connected reductive group with maximal torus $\ul T'$ such that the Weyl group of $\ul H$ with respect to $\ul T'$ is a non-trivial subgroup of the Weyl group of $\ul G$ with respect to $\ul T'$.  Therefore, there is some non-trivial $F$-stable element of $\ul N'/\ul T'$ which
commutes with $a$.  Some power of this element is of prime order, and therefore some power is of the form $x_i$ for some $i\le n$, and it follows that $a$ belongs to $A_i$.

The proposition now follows immediately from Theorem~\ref{soft}.
\end{proof}

\begin{cor}
\label{nearly-cyclic}
For all $A,B>0$ there exists $C$ such that if $\ul G$ is of rank $r$, $\Stab_W(w')$
is abelian of order less than $Ar^2$ and has a cyclic subgroup of index less than $B$, then
for every regular semisimple $t'\in T'$ and every irreducible character $\chi$,
$$|\chi(t')| \le Cr^3.$$
\end{cor}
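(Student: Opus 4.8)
The plan is to feed the structural hypotheses on $S := \Stab_W(w')$ into Proposition~\ref{rss-bound} and then estimate the factor $(4/\sqrt3)^n$, where $n$ is the number of subgroups of prime order in $S$. Since $|S| < Ar^2$, the bound $|\chi(t')| \le (4/\sqrt3)^n |S|$ supplied by Proposition~\ref{rss-bound} reduces the corollary to showing that $(4/\sqrt3)^n = O_{A,B}(r)$, with implied constant depending only on $A$ and $B$; substituting back then gives $|\chi(t')| \le O_{A,B}(r)\cdot Ar^2 = O_{A,B}(r^3)$, and an appropriate choice of $C$ finishes.

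To count $n$, I would decompose over the primes dividing $|S|$. Writing $d_p = \dim_{\F_p} S[p]$ for the $p$-rank of the abelian group $S$, the number of subgroups of order $p$ is $(p^{d_p}-1)/(p-1)$, so that $n = \sum_{p \mid |S|} (p^{d_p}-1)/(p-1)$. Let $K \le S$ be a cyclic subgroup of index $m := [S:K] < B$. The key structural observation is that any prime with $d_p \ge 2$ must divide $m$: the quotient $S[p]/K[p]$ embeds in $S/K$, and $K[p]$ has dimension at most $1$ since $K$ is cyclic, so $d_p \ge 2$ forces $S[p] \not\subseteq K$ and hence $p \mid m$. Consequently, for the primes $p \nmid m$ we have $d_p \le 1$, each contributing exactly $1$ to $n$, and there are at most $\omega(|S|)$ of them, where $\omega(\cdot)$ denotes the number of distinct prime divisors. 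For the primes $p \mid m$ one has $d_p \le 1 + \log_p m$, whence $(p^{d_p}-1)/(p-1) < 2m < 2B$, and there are fewer than $\log_2 B$ such primes. Altogether this yields $n \le \omega(|S|) + 2B\log_2 B$.

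The main point is then the classical number-theoretic estimate $\omega(N) = O(\log N/\log\log N)$ for the maximal order of the number of distinct prime divisors. Applied to $N = |S| < Ar^2$ it gives $\omega(|S|) = o(\log r)$, so that $(4/\sqrt3)^{\omega(|S|)} = \exp\!\bigl(o(\log r)\bigr) = r^{o(1)}$, and in particular $(4/\sqrt3)^{\omega(|S|)} \le C_1 r$ for a constant $C_1 = C_1(A)$ and all $r \ge 1$. Combining this with the bounded contribution $(4/\sqrt3)^{2B\log_2 B}$ coming from the primes dividing $m$ produces $(4/\sqrt3)^n = O_{A,B}(r)$, as required.

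I expect the only real subtlety to lie in this last step. The crude bound $\omega(N) \le \log_2 N$ would only give $(4/\sqrt3)^{\omega(|S|)} = (Ar^2)^{\log_2(4/\sqrt3)}$, an exponent of $r$ exceeding $1$, which is too weak to recover $r^3$; it is precisely the sharper maximal-order estimate for $\omega$ that keeps the prime-counting contribution subpolynomial in $r$. The structural half of the argument — confining all non-cyclic Sylow subgroups of $S$ to the finitely many primes dividing the bounded index $m$ — is what guarantees that every one of the potentially $\sim \log r/\log\log r$ distinct primes contributes only $1$ to $n$ apart from a bounded correction.
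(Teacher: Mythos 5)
Your proposal is correct and follows essentially the same route as the paper: it invokes Proposition~\ref{rss-bound}, splits the subgroups of prime order in $\Stab_W(w')$ into those at primes dividing the bounded index of the cyclic subgroup (bounded in number and in per-prime contribution in terms of $B$) and those at the remaining primes, where the $p$-rank is $1$ and each prime contributes a single subgroup, and then controls the latter via the maximal-order estimate $\omega(N)=O(\log N/\log\log N)$, so that $(4/\sqrt3)^n = r^{o(1)}$ up to a constant depending on $B$. Your write-up makes explicit the structural step (a prime with $p$-rank $\ge 2$ must divide $[S:K]$) that the paper states without proof, and your closing remark correctly identifies why the crude bound $\omega(N)\le\log_2 N$ would not suffice, matching the paper's reliance on the sharper estimate.
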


\begin{proof}
Let $H := \Stab_W(w')$.
The number of subgroups of $H$ of any given prime order $p$ is bounded
in terms of $B$, and the number of subgroups of order $p>B$ is at most $1$.
On the other hand, the number $n$ of distinct primes $p_i$ dividing $|H|$ is $o(\log |H|)$ since the
sum of the $\log p_i$ is at most $\log |H|$, and if the $p_i$ are arranged in ascending order,
$\log p_i > \log i$.   Thus $(4/\sqrt3)^n = |H|^{o(1)} = r^{o(1)}.$  It follows that for all $\epsilon > 0$,
$$|\chi(t')| = O(r^{2+\epsilon}),$$
which implies the corollary.

\end{proof}

\section{Proof of the Main Theorem}

Let $\delta(G)$ denote the minimum degree of a non-trivial irreducible representation of $G$.
By estimates of Landazuri-Seitz \cite{LaSe}, there exists an absolute constant $c_1>0$ such that
\begin{equation}
\label{ls-bound}
\delta(G) \ge c_1 q^r,
\end{equation}
where $\ul G$ is of rank $r$.

We need one more result before beginning the proof of the main theorem.

\begin{prop}
\label{pairs}
There exist absolute constants $c_2$ and $c_3$ such that
for every sufficiently large group of Lie type $G$ (of rank $r$), there exist maximal tori $\ul T_1$ and $\ul T_2$ such that
for all $g\in G$,
\begin{equation}
\label{independent}
T_1\cap g^{-1} T_2 g = Z(G);
\end{equation}
for every regular $t\in T_1\cup T_2$ and every irreducible character $\chi$ of $G$,
\begin{equation}
\label{char-bound}
|\chi(t)| \le c_2 r^3;
\end{equation}
and
\begin{equation}
\label{many-regular}
|T_i^{\rcc}| \ge \frac{c_3|T_i|}{r^2},
\end{equation}
where $T_i^{\rcc}$ denotes the set of $G$-conjugacy classes of regular semisimple elements of
$T_i$.
\end{prop}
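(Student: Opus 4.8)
The plan is to establish the proposition first for $G = \PSL_{r+1}(\F_q)$ (for all but finitely many pairs $(r+1,q)$), since this case already exhibits the whole mechanism; the general statement then follows the same three-part template, with only the choice of tori being type-dependent. All computations are most naturally carried out in $\SL_{r+1}(\F_q)$, and the assertion for $\PSL_{r+1}$ is recovered by passing to the quotient by the center. I would take $\ul T_1$ to be the Coxeter (Singer) torus, so that $T_1 = \ker(N\colon \F_{q^{r+1}}^\times\to\F_q^\times)$ and a generic element of $T_1$ has irreducible characteristic polynomial; here $\Stab_W(w_1)$ is cyclic of order $r+1$. I would take $\ul T_2$ to be the torus of cycle type $(r,1)$, so that $T_2$ consists of the operators $\diag(\beta, N(\beta)^{-1})$ on $\F_{q^r}\times\F_q$ and a generic element has characteristic polynomial equal to an irreducible factor of degree $r$ times a linear factor; here $\Stab_W(w_2)$ is cyclic of order $r$.

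Two of the three conditions are then almost immediate from \S1. The character bound (\ref{char-bound}) is exactly Corollary~\ref{sl-tori}: every regular element of a torus is regular semisimple, so (\ref{Singer}) and (\ref{other-SL}) give $|\chi(t)|\le 2(r+1)^2/\sqrt3$ on $T_1$ and $|\chi(t)|\le 2r^2/\sqrt3$ on $T_2$, both comfortably below $c_2 r^3$. For the many-regular condition (\ref{many-regular}), I would observe that the non-regular elements of $T_i$ are precisely those lying in one of the finitely many proper ``subfield'' subgroups $A_j$ appearing in the proof of Corollary~\ref{sl-tori}; each $|A_j|$ is $O(q^{(r+1)/2})$ while $|T_i|$ is of order $q^r$, and there are only $O(\log r)$ of them, so the regular semisimple elements make up a fraction tending to $1$ as $G$ grows. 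Since $T_i$ contains a regular semisimple element, Lemma~\ref{transp} (applied with $A=T_i$) shows that each $G$-conjugacy class meets $T_i$ in at most $[N_G(T_i):T_i] = |\Stab_W(w_i)| \le r+1$ points. Dividing, $|T_i^{\rcc}| \ge \tfrac12 |T_i|/(r+1) \ge c_3|T_i|/r^2$.

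The real content is the independence condition (\ref{independent}), and here I would argue entirely through characteristic polynomials. If $t_1\in T_1$ is $G$-conjugate to $t_2\in T_2$ then, since conjugate elements of $\GL_{r+1}$ have equal characteristic polynomials, $t_1$ and $t_2$ share one. But the characteristic polynomial of any element of $T_1$ is a prime power $m_\alpha(x)^{(r+1)/d}$ with $m_\alpha$ irreducible of degree $d\mid (r+1)$, whereas that of any element of $T_2$ has the form $m_\beta(x)^{r/e}(x-\gamma)$ with $m_\beta$ irreducible of degree $e\mid r$ and $\gamma\in\F_q$. A polynomial that is a power of a single irreducible can equal a product of this shape only when $m_\beta = x-\gamma$ and $m_\alpha = x-\gamma$, forcing $t_1 = t_2 = \gamma I$ to be scalar, hence central. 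Thus $T_1\cap g^{-1}T_2 g\subseteq Z(G)$ for all $g$, and the reverse inclusion is trivial. Passing to $\PSL_{r+1}$: a conjugacy $\bar t_1 = \bar g^{-1}\bar t_2\bar g$ lifts to $g^{-1}t_2 g = z t_1$ with $z\in Z\subseteq T_1$, so $z t_1\in T_1$ is conjugate to $t_2\in T_2$ and is therefore central, giving $\bar t_1 = e$; the other two conditions descend verbatim.

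The step I expect to be the genuine obstacle is not any of these verifications for $\PSL_n$, but the generalization deferred to \S3: for each Lie type one must produce two $F$-conjugacy classes $[w_1],[w_2]$ in $W$ whose stabilizers are nearly cyclic (so that Corollary~\ref{nearly-cyclic} supplies (\ref{char-bound})) and whose associated tori have ``disjoint spectra'' in the sense needed for (\ref{independent}). Exhibiting such a pair uniformly, and verifying the disjointness without the convenient eigenvalue bookkeeping available for $\SL_n$, is where the case analysis becomes delicate.
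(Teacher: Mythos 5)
Your treatment of the split type-$A$ case is correct and coincides with the paper's own \S2 argument in every essential respect: the same two tori (the Singer/Coxeter torus and the $(r,1)$-type torus), Corollary~\ref{sl-tori} for (\ref{char-bound}), and the subfield count combined with $[N(T_i):T_i]\le r+1$ (via Lemma~\ref{transp}, with Steinberg's theorem guaranteeing that regular elements of $T_i$ are abstractly regular) for (\ref{many-regular}). Your unique-factorization argument for (\ref{independent}) is a harmless rephrasing of the paper's eigenvalue-multiplicity argument: a power of a single irreducible cannot match the shape $m_\beta(x)^{r/e}(x-\gamma)$ unless everything collapses to a linear factor, which is the same mechanism as the paper's observation that all common eigenvalues lie in $\F_{q^{r+1}}\cap\F_{q^r}=\F_q$. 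Two small points: the proposition is stated for the quasi-simple group $G=\ul G(\F_q)$, so your $\PSL$ framing is unnecessary --- what you compute in $\SL_{r+1}$ \emph{is} the statement, and the descent to $G/Z(G)$ occurs only in the proof of Theorem~\ref{main}; and your ``fraction tending to $1$'' count tacitly assumes $r\ge 2$, since for $\SL_2$ the crude bound $O(q^{(r+1)/2})$ is of the same order as $|T_1|$, which is why the paper inserts a separate remark that at most $2$ elements of $T_1$ are then non-regular.

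The genuine gap is that this proves the proposition only for split type $A$, whereas the statement quantifies over all sufficiently large groups of Lie type, and the part you defer is the bulk of the paper's actual proof (its \S3); it does not follow from a ``template,'' but requires an explicit pair $(w_1,w_2)$ and separate verifications for each Dynkin type. Concretely, the paper (i) specifies $w_1,w_2\in W\rtimes 1$ case by case --- e.g.\ for $B_r$ and $C_r$ a Coxeter element giving $|T_1|=q^r+1$ against $w_2$ giving $|T_2|=q^r-1$; for $D_r$ three subcases according to $r\bmod 4$; twisted analogues for ${}^2A_r$ and ${}^2D_r$; and for the exceptional types tori read off from Springer's tables of regular elements; (ii) proves (\ref{independent}) not through spectra (which are no longer convenient outside type $A$) but arithmetically, using that $\gcd(R(q),S(q))$ divides $\res(R(x),S(x))$ together with the formula $\res(\Phi_a,\Phi_b)=\ell$ when $b/a$ is a nonzero integral power of a prime $\ell$ and $1$ otherwise, checking in every case that the gcd divides $|Z(G)|$; (iii) verifies that each $\Stab_W(w_i)$ is abelian of order $O(r^2)$ with a cyclic subgroup of index at most $4$, so that Corollary~\ref{nearly-cyclic} (itself resting on Proposition~\ref{rss-bound}) yields (\ref{char-bound}); and (iv) bounds non-regular elements via conditions of the form $\lambda^{q^k\pm1}=1$, including a genuinely different estimate for $T_2$ in $D_r$ with $r\equiv 2\pmod 4$, where a positive proportion (up to $2/(q^2+1)$) of the torus can be non-regular, so that only ``more than half/one quarter regular'' survives. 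You correctly anticipate that Corollary~\ref{nearly-cyclic} plus spectral disjointness is the right toolkit, but without the explicit constructions and the resultant computations your argument establishes the $\SL_{r+1}$ case only, not the proposition as stated.
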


At this point, we prove the proposition only for the case $G = \SL_{r+1}(\F_q)$.
In section 3, we prove it in the remaining cases for which the Frobenius map is standard.
The proposition holds also for the non-standard cases but we omit the proof, since it
is not needed for Theorem~\ref{main}.
Thanks to \cite{FJ} and \cite{GP} and the results of section 4, the conjecture of
Bauer, Catanese, and Grunewald holds for \emph{all} finite simple
groups of Suzuki and Ree type.

\begin{proof}
Let $T_1$ and $T_2$ denote the centralizers of $t_1$ and $t_2$ as defined in
Corollary~\ref{sl-tori}.  We can identify any element $t\in T_1$ with an element $\alpha$ of
$\F_{q^{r+1}}$ of norm $1$ in $\F_q$.  If $\F_q(\alpha) = \F_{q^k}$, then the eigenvalues of $t$ are the conjugates of $\alpha$ over $\F_q$, and each appears with the same multiplicity, $\frac{r+1}k$.
We can identify any element $t\in g^{-1} T_2 g$ with some $\beta\in \F_{q^r}^\times$.
If $\beta$ generates $\F_{q^l}$, the spectrum of $t$ is the multiset union of the
single eigenvalue $N(\beta)^{-1}\in \F_q$ and all $\F_q$-conjugates of $\beta$, each with multiplicity $\frac rl$.  Therefore, every element in $T_1\cap g^{-1} T_2 g$ has a single eigenvalue, which must belong to $\F_q$.  It follows that $T_1\cap g^{-1} T_2 g$ is the group of scalar matrices, i.e., $Z(G)$.

If $r\ge 2$, $t\in T_1\cup T_2$ is regular semisimple,
and $\chi$ is an irreducible character of $G$, then
$$|\chi(t)| \le \frac{2(r+1)^3}{\sqrt 3} \le \frac{16 r^3}{\sqrt 3},$$
by Corollary~\ref{sl-tori}.
For rank $1$, i.e., in the case $\ul G = \SL_2$, we have $|\chi(t)|\le 2$ for all regular elements and all irreducible characters.

The element in $T_1$ corresponding to $\alpha$ is regular if and only if
$\F_q(\alpha) = \F_{q^{r+1}}$.  The set of elements in $\F_{q^{r+1}}$ not generating
the whole field has cardinality less than
$$\sum_{1 < d\mid r+1} |\F_{q^{\frac{r+1}d}}| \le \sum_{i=1}^{(r+1)/2} q^ i \le \frac{q^{(r+1)/2+1}-1}{q-1}
< 2q^{(r+1)/2}.$$
Except in rank $1$, this quantity is less than $q^r/2 < |T_1|/2$ except when
$q^{(r-1)/2} < 4$.  This gives a finite list of exceptions together with the case $r=1$.
For $\SL_2$, a more careful estimate reveals that the number of elements in
$T_1$ which are not regular is at most $2$, so again more than half of the elements in $T_1$
are regular when $|G|$ is sufficiently large.  Now $[N(T_1):T_1]  \le r+1$ implies
$$|T_1^{rcc}| \ge \frac{|T_1|}{2(r+1)} \ge \frac{|T_1|}{4r^2}.$$
Likewise, the element of $T_2$ corresponding to $\beta\in \F_{q^r}$ is regular as long as
$r\ge 2$ and $\F_q(\beta) = \F_{q^r}$.  There are less than
$$\sum_{i=0}^{r/2} q^i < 2q^{r/2}$$
non-regular elements.  For $r=1$, there are at most $2<q^{r/2}$ non-regular elements in $T_2$.
Either way, if $|G|$ is sufficiently large, at least half of the elements of $T_2$ are regular, and
the estimate for $|T_2^{\rcc}|$ follows from $[N(T_2):T_2] \le r$.

\end{proof}

We now prove the main theorem, assuming Proposition~\ref{pairs}, for all finite simple groups of the form $G/Z(G)$, where $G = \ul G(\F_q)$.

\begin{proof}

By \cite{GP}, we may assume that $G$ is not of the form $\SL_2(\F_q)$, $\SL_3(\F_q)$,
or $\SU_3(\F_q)$.

Let $X_i$ denote the union of all conjugacy classes $C_{i,j}$
of regular semisimple elements of $T_i$.
We claim that for $i\in\{1,2\}$, there exist $(x_i,y_i,z_i)\in X_i^3$
such that $x_i y_i = z_i$, and $\langle x_i,y_i\rangle = G$.
Let $\bar x_i,\bar y_i,\bar z_i$ denote the images of
$x_i,y_i,z_i$ respectively in $\bar G := G/Z(G)$.
By (\ref{independent}), we have
$$\Sigma(\bar x_1,\bar y_1)\cap \Sigma(\bar x_2,\bar y_2) = \{e\}.$$

To prove that triples $(x_i,y_i,z_i)$ as above actually exist, we estimate the cardinality
$$N_i := \{(x_i,y_i,z_i)\in X_i^3\mid x_iy_i = z_i\}$$
and compare this to the sum
\begin{equation}
\label{non-maximal}
\sum_{M\in \max G}|\{(x_i,y_i,z_i)\in (X_i\cap M)^3\mid x_iy_i = z_i\}|,
\end{equation}
where $\max G$ denotes the set of maximal proper subgroups of $G$.
This quantity is bounded above by
$$\sum_{M\in \max G} |M|^2 = |G|^2 \sum_{M\in \max G} \frac 1{[G:M]^2}
\le \frac{|G|^2}{m(G)^{1/2}}\sum_{M\in \max G}[G:M]^{-3/2},$$
where $m(G)$ is the minimal index of a proper subgroup of $G$ or, equivalently, the minimal degree of a non-trivial permutation representation of $G$.
By (\ref{ls-bound}), $m(G) \ge c_1 q^r$.
The main theorem of \cite{LMS}
asserts that for all $s>1$,
$$\lim_{|G|\to\infty}\sum_{M\in \max G}[G:M]^{-s}\to 0,$$
where the limit is taken over any sequence of finite simple groups with order tending to $\infty$.
We could equally well take the limit over quasi-simple groups,
since there is an index-preserving bijective correspondence between the
maximal proper subgroups of a quasi-simple group and those of its simple quotient.
This implies that (\ref{non-maximal}) is bounded above by
$q^{-r/2}|G|^2$ for all $|G|$ sufficiently large.

Decomposing $X_i$ into conjugacy classes $C_{i,j}$, indexed by $j\in T^{\rcc}_i$, we can write
$$N_i = \sum_{j,k,l\in T_i^{\rcc}}
|\{(x,y,z)\in C_{i,j}\times C_{i,k}\times C_{i,l}\mid xy = z\}|$$
Using the well-known formula \cite[(7.3)]{Ser} for the summand, this is
\begin{equation}
\label{quad-sum}
\sum_{j,k,l\in T^{\rcc}_i}\frac{|G|^2}{|T_i|^3}\sum_{\chi \in G^*}\frac{\chi(C_{i,j})\chi(C_{i,k})\bar\chi(C_{i,l})}
{\chi(1)}.
\end{equation}
By (\ref{many-regular}), the contribution of the trivial character to (\ref{quad-sum}) is
$$\frac{|G|^2|T^{\rcc}_i|^3}{|T_i|^3}\ge c_4|G|^2 r^{-6},$$
for some constant $c_4$ independent of $G$.  By (\ref{char-bound}), the absolute value of the sum
of the contribution of all non-trivial characters to (\ref{quad-sum}) is bounded above by
\begin{equation}
\label{ineq}
c_5 r^9|G|^2\sum_{\chi\neq 1} \chi(1)^{-1}\le c_5 r^9\delta(G)^{-1/4}|G|^2\sum_{\chi\neq 1} \chi(1)^{-3/4}.\end{equation}

Now, \cite[Corollary 1.3]{LiSh2} asserts that for every $t>1/2$,
$$\lim_{|G|\to\infty} \sum_{\chi}\chi(1)^{-t} = 1,$$
where $G$ ranges over groups of Lie type not of the form $\SL_2(\F_q)$, $\SL_3(\F_q)$, or $\SU_3(\F_q)$.    (Actually, this corollary is stated in a form where $G$ is assumed to be simple, but the proof, an immediate application of Theorems 1.1 and 1.2 of the same paper, holds equally when $G$ ranges over quasi-simple groups, as in this case.)
Having already excluded $\SL_2(\F_q)$, $\SL_3(\F_q)$,and $\SU_3(\F_q)$, we may set
$t=3/4$, and conclude that
$$\lim_{|G|\to\infty} \sum_{\chi\neq 1}\chi(1)^{-3/4} = 0.$$
From this and (\ref{ls-bound}), we deduce that for $G$ sufficiently large, the right hand side of
(\ref{ineq}) is bounded above by
$$r^9q^{-r/4} |G|^2,$$
and it follows that (\ref{quad-sum}) is bounded below by
$$c_6 r^{-6} |G|^2$$
for $G$ sufficiently large.  Comparing this to our upper bound for (\ref{non-maximal}), we conclude that for all $G$ sufficiently large, there exist triples $(x_i,y_i,z_i)$ with the properties claimed.

\end{proof}

\section{Pairs of Maximal Tori}

In this section, we prove Proposition~\ref{pairs} in the case that $F$ is a standard Frobenius map.
\begin{proof}

For each choice of Dynkin diagram $\Delta$ and each positive integer $m$ dividing the order of
$\Aut(\Delta)$, we specify two elements $w_1,w_2\in W\rtimes 1$.
We then prove that if $\ul T_1$ and $\ul T_2$ are $\F_q$-rational maximal tori of $\ul G$
corresponding to $w_1$ and $w_2$ respectively,
then $T_i := \ul T_i(\F_q)$ satisfy (\ref{independent}), (\ref{char-bound}), and (\ref{many-regular}) .
Instead of proving (\ref{char-bound}) directly, we deduce it from Proposition~\ref{rss-bound} and the inequalities
\begin{equation}
\label{small-normalizer}
[N_i:T_i] \le  c_7 r^2,
\end{equation}
and
\begin{equation}
\label{few-max}
n_i \le \log_4 r + c_8,
\end{equation}
where $N_i$ is the group of $\F_q$-points of the normalizer $\ul N_i$ of $\ul T_i$,
and $n_i$ denotes the number of subgroups of prime order in $\Stab_W(w_i)$.

We note that
$T_i$ will always contain at least one regular semisimple element and will therefore
be non-degenerate, so if $c_7$ and $c_8$ exceed $|W|$,
(\ref{small-normalizer}) and (\ref{few-max}) hold trivially.
We will only check these conditions, therefore, in the classical cases,
i.e., $A_r$, ${}^2A_r$, $B_r$, $C_r$, $D_r$, and ${}^2 D_r$.

By \cite[3.2.2]{Car}, $|T| = |Y/(F-1)Y|$, where $Y$ denote the cocharacter group of $\ul T$.
If $\ul T$ is associated to the orbit of $(w,1)$
in $W\rtimes 1$, this equals the determinant of $q w$ acting
on the coweight space (or equivalently on the weight space).  The eigenvalues of $(w,1)$ are
roots of unity, and decomposing the spectrum into Galois orbits, we obtain a formula for
$|T|$ of the form $\prod_{j=1}^s \Phi_{k_j}(q)$, where $\Phi_k$ denotes the $k$th cyclotomic polynomial, and the total degree of the product is $r$.
The subset of elements of $T$ fixed by any particular non-trivial
element of $W$ is a polynomial in $q$ of lower degree.  As every semisimple element in $T$ which is not regular is fixed by some non-trivial element of $W$,
it follows that in fixed rank,
(\ref{many-regular}) holds for all sufficiently large $G$.

To check (\ref{independent}) it is useful to note that for monic polynomials $R(x)$ and $S(x)$,
the greatest common denominator of $R(q)$ and $S(q)$ divides the resultant of $R$ and $S$.
Also, for cyclotomic polynomials
$$\res(\Phi_a(x),\Phi_b(x)) =
\begin{cases}
\ell&\text{if $b/a = \ell^k$, $\ell$ prime, $k\in\Z\setminus\{0\}$.}\\
1&\text{otherwise.}
\end{cases}$$
To compute the resultant of two products of cyclotomic polynomials, we use
bimultiplicativity.  In general, the greatest common divisor
$(R(q),S(q))$ divides $(\res(R(x),S(x)),R(q))$.
Since $T_1\cap g^{-1} T_2 g$ contains $Z(G)$ for all $g$, it suffices to prove that
$(R(q),S(q))$ divides $|Z(G)|$ to prove that $T_1\cap g^{-1} T_2 g = Z(G)$ for all $g\in G$.

We now specify choices for $w_i$ in each case, except split $A_r$, which has already been treated, checking (\ref{many-regular}), (\ref{small-normalizer}), and (\ref{few-max})
in the classical cases and (\ref{independent}) in all cases.
Note that (\ref{many-regular}) follows from (\ref{small-normalizer}) as long as at least
one quarter of the elements of $T_i$ are regular semisimple, which we will see is in fact the case whenever $G$ is sufficiently large.
By Corollary~\ref{nearly-cyclic}, (\ref{few-max}) follows from (\ref{small-normalizer}) as long
as $\Stab_W(w_i)$ is abelian with a cyclic subgroup of bounded order.

\case{$G = {}^2A^{\sc}_r(q) = \SU_{r+1}(\F_q)$.}
In this case, $W = S_{r+1}$, and $m=2$.  The action of $F$ is inner
(it is in fact conjugation by a product of $\lfloor (r+1)/2\rfloor$ disjoint $2$-cycles), so
$W' = S_{r+1}\times \Z/2\Z$, and $S_{r+1}$-orbits of elements of $S_{r+1}\times 1$
can be identified with $S_{r+1}$-conjugacy classes. We set
$$w_1 = ((1\,2\,3\,\ldots\,r+1),1),\ w_2 = ((1\,2\,3\,\ldots\, r),1).$$
The stabilizers of $w_1$ and $w_2$ are cyclic groups of order $r+1$ and $r$ respectively, just as in the split case.  This gives (\ref{small-normalizer}) and (\ref{few-max}).

We have
\begin{equation*}
\begin{split}
(|T&_1|,|T_2|) = (R(q),S(q)(q+1)) \\
& = (R(q),S(q))(R(q),q+1)
 | \res(R(x),S(x))(\res(R(x),x+1),q+1) \\
& = (r+1,q+1),
\end{split}
\end{equation*}
where
$$R(x) = \frac{x^{r+1}-1}{x-1},\ S(x) = \frac{x^r-1}{x-1}.$$
Thus, $(|T_1|,|T_2|)$ divides the order of $Z(G)$, so $T_1\cap g^{-1} T_2 g = Z(G)$.
This gives (\ref{independent}).

The analysis of non-regular elements of $T_1$ or $T_2$ works as in the split case, giving an $O(q^{(r+1)/2})$ upper bound,
so again more than half of the elements are regular when $G$ is large.  This implies (\ref{many-regular})

\case{$G = B_r^{\sc}(q) = \Spin_{2r+1}(\F_q)$.}
Here the Weyl group is $(\Z/2\Z)^r\rtimes S_r$, i.e., the group of permutations
of the set $\{1,2,\ldots,r,1',2,'\ldots,r'\}$ which respects the partition
$\{\{1,1'\},\ldots,\{r,r'\}\}$.  We set
$$w_1 = (1\,2\,\ldots\,r\,1'\,2'\,\ldots\,r'),\ w_2 = (1\,2\,\ldots\,r)(1'\,2'\,\ldots\,r').$$
The centralizers of $w_1$ and $w_2$ are $\Z/2r\Z$ and $\Z/2\Z\times \Z/r\Z$ respectively, so
again we have (\ref{small-normalizer}) and (\ref{few-max}).  The torus orders are
$$|T_1| = q^r+1,\ |T_2| = q^r-1,$$
so $(|T_1|,|T_2|)$ divides $(2,q^r-1) = (2,q-1) = |Z(G)|$.

We can identify $T_i$ with $\{\lambda\in \bar\F_q\mid \lambda^{|T_i|}=1\}$,
and if $t\in T_i$ fails to be regular, the corresponding element $\lambda$ satisfies
$\lambda^{q^k\pm 1} = 1$ where $p^k\pm 1\mid |T_i|$.  Thus, we may assume $k\le r/2$,
and the number of non-regular elements of $T_1$ or $T_2$ is $O(q^{r/2})$.

\case{$G = C_r^{\sc}(q) = \Sp_{2r}(\F_q)$.}
This case is in all respects parallel to that of $B_r^{\sc}(q)$.

\case{$G = D_r^{\sc}(q) = \Spin^+_{2r}(\F_q)$.}
We regard $W$ as an index $2$ subgroup of $(\Z/2\Z)^r\rtimes S_r$.
If $r$ is divisible by $4$, we set $s=r/2$ and let
\begin{align*}
w_1 &= (1\,2\,\ldots\,s-1)(s\,\ldots\,r)(1'\,2'\,\ldots\,(s-1)')(s'\,\ldots\,r') \\
w_2 &= (1\,2\,\ldots\,(s-1)\,1'\,2'\,\ldots\,(s-1)')(s\,\ldots\,r\,s'\,\ldots\,r').
\end{align*}
The centralizers of $w_1$ and $w_2$ are both contained in
$$\Z/(2s-2)\Z\times \Z/(2s+2)\Z\cong \Z/(s^2-1)\Z\times (\Z/2\Z)^2,$$
which has order $O(r^2)$ and
a cyclic subgroup of index $4$.
We have
$$|T_1| = (q^{s-1}-1)(q^{s+1}-1),\ |T_2| = (q^{s-1}+1)(q^{s+1}+1),$$
so $(|T_1|,|T_2|)$ divides
\begin{align*}\res(x^{s-1}&-1,x^{s-1}+1)\res(x^{s-1}-1,x^{s+1}+1) \\
&\cdot \res(x^{s+1}-1,x^{s-1}+1)\res(x^{s+1}-1,x^{s+1}+1) =16,
\end{align*}
but also the highest power of $2$ dividing $(|T_1|,|T_2|)$ is $4$ if $q$ is odd and $1$
if $q$ is even.  Thus, $(|T_1|,|T_2|) = |Z(G)|$.

If $r\equiv2$ (mod $4$), we set
\begin{align*}
w_1 &= (1\,2\,\ldots\,r)(1'\,2'\,\ldots\,r'),\\
w_2 &= (1\,2\,1'\,2')(3\,4\,\ldots\,r\,3'\,4'\,\ldots\,r').
\end{align*}
The centralizers of $w_1$ and $w_2$ are $\Z/r\Z\times \Z/2\Z$
and $\Z/(2r-2)\Z\times \Z/4\Z$ respectively.  Each has order $O(r)$ and a cyclic subgroup of index $\le 4$.  We have
$$|T_1| = q^r-1,\ |T_2| = (q^{r-2}+1)(q^2+1),$$
so $(|T_1|,|T_2|)$ divides
$$(q^r-1,q^{r-2}+1)(q^r-1,q^2+1) = |Z(G)|.$$

If $r$ is odd, we set
\begin{align*}
w_1 &= (1\,2\,\ldots\,r)(1'\,2'\,\ldots\,r'),\\
w_2 &= (1\,1')(2\,3\,\ldots\,r\,2'\,3'\ldots\,r').
\end{align*}
The centralizer of $w_1$ is $\Z/r\Z$,
while the centralizer of $w_2$ is $\Z/(2r-2)\Z$.   We have
$$|T_1| = q^r-1,\ |T_2| = (q^{r-1}+1)(q+1),$$
so $(|T_1|,|T_2|)$ divides
$$(q+1,q^r-1)(q^{r-1}+1,q^r-1) =  (2,q+1)^2 = |Z(G)|.$$

With one exception, we can bound the number of non-regular elements of $T_i$
by $O(q^{(s-1)/2})$.  This exception is $T_2$ in the case that $r$ is $2$ (mod $4$).
Here elements of $T_2$ can be regarded as pairs $(\alpha,\beta)\in \bar\F_q^2$
such that $\alpha^{q^2+1} = \beta^{q^{r-2}+1} = 1$.
A necessary condition that such a pair correspond to a non-regular element is that
$\alpha^{q^a\pm 1} = 1$ for some positive $q^a\pm 1 < q^2+1$ or
$\beta^{q^b\pm 1} = 1$ for some positive $q^b\pm 1 < q^{r-2}+1$.  The former condition can
hold for as many as $\frac 2{q^2+1} |T_2| \le (2/5)|T_2|$ pairs, while the latter condition
holds for $O(q^{(r+2)/2})$ pairs.  We conclude that if $G$ is sufficiently large, it is still true that
more than half of the elements of $T_2$ are regular.

\case{$G = {}^2D_r^{\sc}(q) = \Spin^-_{2r}(\F_q)$.}
We identify $W\rtimes \Z/2\Z$ with $(\Z/2\Z)^r\rtimes S_r$.  If $r$ is even, we set
\begin{align*}
w_1 &= (1\,2\,\ldots\,r\,1'\,2'\,\ldots\,r'),\\
w_2 &= (1\,1')(2\,3\,\ldots\,r)(2'\,3'\ldots\,r').
\end{align*}
The $W$-stabilizers of $w_1$ and $w_2$  are contained in their centralizers in
$(\Z/2\Z)^r\rtimes S_r$, which are respectively
$\Z/2r\Z$ and $\Z/(r-1)\Z\times \Z/2\Z$.
Each has order $O(r)$ and a cyclic subgroup of index $\le 2$.
We have
$$|T_1| = q^r+1,\ |T_2| = (q^{r-1}-1)(q+1),$$
so $(|T_1|,|T_2|)$ divides
$$(q+1,q^r+1)(q^{r-1}-1,q^r+1) = (2,q+1)^2 = |Z(G)|.$$
There are $O(q^{r/2})$ non-regular elements in $T_1$ and at most
$\frac {2}{q+1}|T_2| +O(q^{(r+1)/2})$ non-regular elements in $T_2$.
IF $G$ is sufficiently large, more than one quarter of the elements of $T_i$ are regular.

If $r$ is odd, we set
\begin{align*}
w_1 &= (1\,2\,\ldots\,r\,1'\,2'\,\ldots\,r'),\\
w_2 &= (1\,2\,1'\,2')(3\,\ldots\,r)(3'\ldots\,r').
\end{align*}
The centralizers of $w_1$ and $w_2$ in $(\Z/2\Z)^r\rtimes S_r$ are respectively
$\Z/2r\Z$ and $\Z/(r-2)\Z\times \Z/4\Z$, each of order $O(r)$ with a cyclic subgroup of
index $\le 4$.  We have
$$|T_1| = q^r+1,\ |T_2| = (q^{r-2}-1)(q^2+1),$$
so $(|T_1|,|T_2|)$ divides
$$(q^2+1,q^r+1)(q^{r-1}-1,q^r+1) = (2,q^2+1)^2 = (2,q+1)^2 = |Z(G)|.$$
There are $O(q^{r/2})$ non-regular elements in $T_1$ and at most
$\frac2{q^2+1}|T_2|+O(q^{(r+2)/2})$ non-regular elements in $T_2$.
If $G$ is sufficiently large, more than half of the elements of $T_i$ are regular.

\case{$G = {}^3D^{\sc}_4(q)$.}
By \cite[Table 9]{Spr}, there exists elements $w_1$ and $w_2$ such that
$$|T_1| = \Phi_1(q)^2\Phi_2(q)^2,\ |T_2| = \Phi_{12}(q).$$
so $(|T_1|,|T_2|)$ divides
$$\res(\Phi_1(x)^2\Phi_2(x)^2,\Phi_{12}(x))=1.$$

\case{$G = E_6^{\sc}(q)$.}
By \cite[Table 1]{Spr}, there exists elements $w_1$ and $w_2$ such that

$$|T_1| = \Phi_1(q)\Phi_2(q)\Phi_8(q),\ |T_2| = \Phi_9(q),$$
so $(|T_1|,|T_2|)$ divides
\begin{align*}
(\res(\Phi_1(x)\Phi_2(x)\Phi_8(x),\Phi_9(x))&,\Phi_9(q))=(3,\Phi_9(q)) \\
&= (3,q-1) = |Z(G)|.
\end{align*}
\case{$G = {}^2E_6^{\sc}(q)$.}
By \cite[Table 1]{Spr}, there exists elements $w_1$ and $w_2$ such that
$$|T_1| = \Phi_1(q)\Phi_2(q)\Phi_8(q),\ |T_2| = \Phi_{18}(q),$$
so $(|T_1|,|T_2|)$ divides
\begin{align*}
(\res(\Phi_1(x)\Phi_2(x)\Phi_8(x),\Phi_{18}(x)&,\Phi_{18}(q))=(3,\Phi_{18}(q)) \\
&= (3,q+1) = |Z(G)|.
\end{align*}

\case{$G = E_7^{\sc}(q)$.}
By \cite[Table 2]{Spr}, there exists elements $w_1$ and $w_2$ such that
$$|T_1| = \Phi_1(q)\Phi_9(q),\ |T_2| = \Phi_2(q)\Phi_{14}(q),$$
so $(|T_1|,|T_2|)$ divides
\begin{align*}
(\res(\Phi_1(x)\Phi_9(x)&,\Phi_2(x)\Phi_{14}(x))=(2,\Phi_1(q)\Phi_9(q)) \\
&= (2,q-1) = |Z(G)|.
\end{align*}

\case{$G = E_8(q)$.}
By \cite[Table 3]{Spr}, there exists elements $w_1$ and $w_2$ such that
$$|T_1| = \Phi_{24}(q),\ |T_2| = \Phi_{30}(q),$$
so $(|T_1|,|T_2|)$ divides
$$\res(\Phi_{24}(x),\Phi_{30}(x)) = 1.$$

\case{$G = F_4(q)$.}
By \cite[Table 4]{Spr}, there exists elements $w_1$ and $w_2$ such that
$$|T_1| = \Phi_8(q),\ |T_2| = \Phi_{12}(q),$$
so $(|T_1|,|T_2|)$ divides
$$\res(\Phi_8(x),\Phi_{12}(x)) = 1.$$

\case{$G = G_2(q)$.}
Let $w_1$ and $w_2$ denote rotations of $\pi$ and $2\pi/3$ respectively.  Then
$$|T_1| = \Phi_2(q)^2,\ |T_2| = \Phi_3(q),$$
so $(|T_1|,|T_2|)$ divides
$$\res(\Phi_2^2(x),\Phi_3(x)) = 1.$$

\end{proof}

\section{Ree groups}

In this section we finish the proof of Theorem~\ref{main} by
treating the family of Ree groups ${}^2F_4(2^{2f+1})$.
The other family of Ree groups ${}^2G_2(3^{2f+1})$ and the Suzuki
groups ${}^2B_2(2^{2f+1})$ can be treated similarly. However,
alternative proofs for these groups have already been provided in
\cite{FJ} and \cite{GP}.

The proof for the Ree groups $G={}^2F_4(2^{2f+1})$ relies on the
following two theorems.

\begin{thm}\cite{Mal}. \label{Mal}
The group $G={}^2F_4(q)$, where $q=2^{2f+1}$ ($f \geq 1$), contains
two (cyclic) maximal tori $T_1$ and $T_2$ whose orders $\tau_1$ and
$\tau_2$ are given by
\begin{align*}
   \tau_1&= q^2+\sqrt{2q^3}+q+\sqrt{2q}+1
   = 2^{4f+2} + 2^{3f+2} + 2^{2f+1} + 2^{f+1} +1 ,\\
   \tau_2&= q^2-\sqrt{2q^3}+q-\sqrt{2q}+1
   = 2^{4f+2} - 2^{3f+2} + 2^{2f+1} - 2^{f+1} + 1.
\end{align*}
Moreover,
\[
    N_G(T_1) \cong Z_{\tau_1} \rtimes Z_{12} \text{ and }
    N_G(T_2) \cong Z_{\tau_2} \rtimes Z_{12}
\]
are maximal subgroups in $G$, and they are the only maximal
subgroups containing $T_1$ and $T_2$ respectively.
\end{thm}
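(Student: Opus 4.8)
The plan is to derive the three assertions from the general theory of maximal tori in finite groups of Lie type together with the classification of maximal subgroups of $G = {}^2F_4(q)$. First I would classify the $F$-rational maximal tori of $G$ through the correspondence between $G$-classes of such tori and $F$-conjugacy classes in the Weyl group $W = W(F_4)$; here $F$ is the Ree isogeny built from the exceptional order-$2$ graph automorphism of $F_4$, and on the root datum it interchanges long and short roots while scaling lengths by $\sqrt2$, so that the quantity $\sqrt{2q}$ enters through its eigenvalues. The tori $T_1$ and $T_2$ are those attached to the two $F$-conjugacy classes of maximally twisted regular elements $w_1, w_2 \in W$. For each one, the order is $|T_i| = |\det(w_iF - 1)|$ on the cocharacter lattice $Y$, exactly as in \cite[3.2.2]{Car}; carrying out this determinant with the $\sqrt{2q}$-scaling yields precisely the stated $\tau_i$. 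As a sanity check one computes $\tau_1\tau_2 = (q^2+q+1)^2 - 2q(q+1)^2 = q^4 - q^2 + 1 = \Phi_{12}(q)$, which identifies $T_1$ and $T_2$ as the two ``Ree halves'' of the Coxeter torus of $F_4$; cyclicity of each $T_i$ then amounts to checking that $w_iF$ acts on $Y$ with a single rational canonical block.

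Next I would compute the normalizers. By the twisted form of \cite[3.6.5]{Car}, $N_G(T_i)/T_i$ is isomorphic to the centralizer $C_W(w_iF)$. Since $w_iF$ is a regular element in Springer's sense of order equal to the Coxeter number $h = 12$ of $F_4$, Springer's theory of regular elements shows that this centralizer is cyclic of order $12$, generated by $w_iF$ itself. This gives $N_G(T_i) \cong Z_{\tau_i} \rtimes Z_{12}$ directly, the appearance of $12$ being precisely the Coxeter number rather than a coincidence.

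The hard part is the final claim: that each $N_G(T_i)$ is maximal in $G$ and is the \emph{only} maximal subgroup containing $T_i$. This is where the real difficulty lies and is exactly the content imported from \cite{Mal}. The natural strategy is a primitive-prime-divisor argument. Because $\tau_1\tau_2 = \Phi_{12}(q)$ and each $\tau_i$ is a large factor of it, one checks that $\tau_i$ is coprime to the order of every proper subsystem subgroup of $G$ and to $|{}^2F_4(q_0)|$ for every proper subfield $q_0$; hence a primitive prime divisor $\ell \mid \tau_i$ supplies an element whose order cannot be accommodated inside any maximal subgroup of parabolic, reductive-of-maximal-rank, or subfield type. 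Eliminating in addition the finitely many non-generic almost-simple maximal subgroups, the only maximal subgroup that can contain such an $\ell$-element, and therefore all of the cyclic group $T_i$, is $N_G(T_i)$ itself; its maximality is read off from the same classified list. The entire argument rests on the completeness of the maximal-subgroup classification of ${}^2F_4(q)$, which is the principal obstacle and the reason the theorem is quoted here rather than reproved.
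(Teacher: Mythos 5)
This statement is imported wholesale from Malle's paper \cite{Mal}; the paper under review gives no proof of it at all, so your reconstruction is necessarily a different route, and as an outline it is essentially sound. Your sanity check is correct: $\tau_1\tau_2=(q^2+q+1)^2-2q(q+1)^2=q^4-q^2+1=\Phi_{12}(q)$, and these two factors are exactly the values of the two conjugate factors of $\Phi_{24}$ over $\Q(\sqrt2)$, which is the standard way the tori are identified (cf.\ Shinoda's classification of the maximal tori of ${}^2F_4$, on which Malle relies). Your primitive-prime-divisor step is also legitimately grounded: since $\tau_i\equiv 1\pmod{12}$ (part (i) of Lemma~\ref{lem.t1.t2}), every prime divisor $\ell$ of $\tau_i$ is coprime to $12$, and a prime dividing $\Phi_{12}(q)$ but not $12$ necessarily has $q$ of order exactly $12$ modulo $\ell$, so such $\ell$ really is a primitive prime divisor and the standard elimination of parabolic, subsystem, and subfield subgroups applies. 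You are also right to locate the irreducible core in the completeness of the maximal-subgroup classification of ${}^2F_4(q)$: that is precisely the content of \cite{Mal}, and no short argument replaces it, which is why the paper quotes rather than proves the theorem. Two details in your sketch are imprecise, though neither is fatal. First, in the twisted setting the relevant Springer-regular number is $24$, not the Coxeter number $12$ of $F_4$ (the eigenvalue bookkeeping involves the $\sqrt{2q}$-scaling you mention), and the group $N_G(T_i)/T_i\cong Z_{12}$ arises as the centralizer \emph{in $W$} of the twisted regular element $w_iF$ of the coset, not as the group generated by $w_iF$ itself, which does not lie in $W$. Second, cyclicity of $T_i=Y/(w_iF-1)Y$ follows from $Y$ being a cyclic $\Z[w_iF]$-module (so that $T_i\cong \Z[x]/(p(x),x-1)$), which is an integral statement; a single rational canonical block over $\Q$ only gives a finite-index cyclic submodule and is not by itself sufficient, so that step needs either the integral verification or, as in practice, citation of the known torus structure.
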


Recall that $Z_n$ denotes a cyclic group of order $n$.

\begin{thm}\cite{Gow}. \label{Gow}
Let $G$ be a finite simple group of Lie type, and let $z \neq 1$ be
a semisimple element in $G$. Let $C_1$ and $C_2$ be any conjugacy
classes of $G$ consisting of regular semisimple elements. Then $z$
is expressible as $z=xy$, where $x \in C_1$ and $y \in C_2$.
\end{thm}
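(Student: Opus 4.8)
The plan is to prove the statement by the Frobenius class-algebra method, showing that the number of factorizations is strictly positive. Fix regular semisimple representatives $x\in C_1$ and $y\in C_2$, and set
\[
N(z)=\#\{(a,b)\in C_1\times C_2: ab=z\}.
\]
The class-multiplication formula expresses this as
\[
N(z)=\frac{|C_1|\,|C_2|}{|G|}\sum_{\chi}\frac{\chi(x)\chi(y)\overline{\chi(z)}}{\chi(1)},
\]
the sum running over $\mathrm{Irr}(G)$. Isolating the trivial character gives a main term $|C_1|\,|C_2|/|G|$, and since $x$ and $y$ are regular semisimple their centralizers are maximal tori, so $|C_i|=|G|/|T_i|$ and the main term equals $|G|/(|T_1||T_2|)$, of size $\approx q^{\dim\ul G-2r}$. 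It therefore suffices to show that the contribution $E$ of the non-trivial characters satisfies $E>-1$.

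For the factors attached to $x$ and $y$ I would invoke the regular-semisimple character bounds developed here: by Proposition~\ref{rss-bound} one has $|\chi(x)|,|\chi(y)|=O(r^3)$, and, more usefully, the column-orthogonality identity $\sum_\chi |\chi(x)|^2/\chi(1)=|C_G(x)|=|T_1|$ (and likewise for $y$) shows that $(\chi(x)/\chi(1)^{1/2})_\chi$ and $(\chi(y)/\chi(1)^{1/2})_\chi$ carry small weighted $\ell^2$-mass. These are exactly the estimates that make the analogous sum converge in the proof of Theorem~\ref{main}, where the target element is itself taken to be regular semisimple.

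The main obstacle is controlling $\chi(z)$ when $z$ is semisimple but \emph{not} regular, since then $C_G(z)$ may be nearly all of $G$ and the crude bound $|\chi(z)|\le\chi(1)$ is far too weak: combined with the estimates above it only yields $|E|\le(|T_1||T_2|)^{1/2}\approx q^r$, which does not force $E>-1$. To close this gap I would bring in Deligne--Lusztig theory. By the Jordan decomposition of characters, $\chi(z)$ for semisimple $z$ is governed by the unipotent character theory of the reductive centralizer $C_G(z)$, so that $|\chi(z)|$ is bounded in terms of a unipotent-character degree of $C_G(z)$ and a constituent multiplicity, while the characters with $\chi(z)\neq0$ fall into a controlled number of Lusztig series. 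Summing $E$ series by series, and feeding in the small mass of $x$ and $y$ together with the Landazuri--Seitz bound $\chi(1)\ge c_1q^r$ and the convergence of $\sum_\chi\chi(1)^{-t}$ for $t>1/2$ used in Section~2, one should bound $|E|$ by a quantity tending to $0$. Alternatively one may argue geometrically: the fibre variety $\{(a,b): a\in\bar C_1,\ b\in\bar C_2,\ ab=z\}$ over $\bar\F_q$ has the expected dimension $\dim\ul G-2r$, and a Lang--Weil count with a rationality argument produces an $\F_q$-point, hence a factorization in $G$. Either way the difficulty is concentrated entirely in the non-regular semisimple values $\chi(z)$; once these are controlled, positivity of $N(z)$ is immediate.
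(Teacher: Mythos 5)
You cannot be faulted for not matching the paper's internal proof here, because there is none: Theorem~\ref{Gow} is imported verbatim from Gow's paper \cite{Gow}, with no argument given in the text. Judged as a proof of Gow's theorem itself, however, your sketch has two genuine gaps. First, the character bounds you invoke are not available at the required generality. Proposition~\ref{rss-bound} gives $|\chi(x)|\le(4/\sqrt3)^n|\Stab_W(w')|$, and the $O(r^3)$ form comes only from Corollary~\ref{nearly-cyclic}, which assumes $\Stab_W(w')$ is abelian of order $O(r^2)$ with a bounded-index cyclic subgroup --- a property of the \emph{specially chosen} tori of Proposition~\ref{pairs}, not of all maximal tori. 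Gow's theorem quantifies over arbitrary regular semisimple classes $C_1,C_2$; for the split torus, say, the stabilizer is all of $W^F$ and the bound degenerates to something exponential in the rank. (Incidentally, the identity you quote, $\sum_\chi|\chi(x)|^2/\chi(1)=|C_G(x)|$, is not the second orthogonality relation; the correct identity $\sum_\chi|\chi(x)|^2=|C_G(x)|$ has no $\chi(1)$ in the denominator.) Second --- and you flag this yourself --- the step that controls $\chi(z)$ for non-regular semisimple $z$ is never carried out: ``summing $E$ series by series \dots one should bound $|E|$'' and the Lang--Weil alternative are programmes, not arguments, and the latter additionally requires geometric irreducibility of the fibre and produces points only when $q$ is large relative to the rank.

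Even if every estimate were completed, the approach is structurally incapable of delivering the theorem as stated. All of your inputs (Landazuri--Seitz with an unspecified constant, the Liebeck--Shalev convergence $\sum_\chi\chi(1)^{-t}\to 1$, Lang--Weil) are asymptotic, so the conclusion would carry a ``for $|G|$ sufficiently large'' proviso, whereas Gow's theorem holds for \emph{every} finite simple group of Lie type and every semisimple $z\ne 1$, with no exceptions --- and it is used in Section~4 precisely for the single family ${}^2F_4(2^{2f+1})$ including small $f$. Gow's actual proof avoids asymptotics entirely: the characteristic function of a regular semisimple class is a \emph{uniform} class function, i.e., lies in the span of the Deligne--Lusztig virtual characters $R_{T,\theta}$, so the Frobenius structure-constant sum collapses to an explicit finite sum over pairs $(T,\theta)$, which is evaluated exactly via orthogonality of Green functions and shown to be strictly positive for all $q$. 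It is worth noting that the paper's own proof of Theorem~\ref{main} sidesteps exactly the difficulty you isolate: there the target element $z_i$ is itself taken regular semisimple in $T_i$, so only character values at regular semisimple elements ever need bounding --- which is precisely why that argument does not generalize to arbitrary semisimple $z$ and why the exact theorem of Gow is quoted instead.
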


We recollect a few observations on the orders $\tau_1$ and $\tau_2$.

\begin{lem}\label{lem.t1.t2}
The orders $\tau_1$ and $\tau_2$ appearing in Theorem~\ref{Mal}
satisfy:
\begin{enumerate}\renewcommand{\theenumi}{\it \roman{enumi}}
\item $\tau_1 \equiv 1 \pmod {12}$ and $\tau_2 \equiv 1 \pmod {12}$.
\item If $f \geq 2$, then $\phi(\tau_1), \phi(\tau_2) \geq 12 \cdot 13 = 156$, where $\phi$ denotes
the Euler function.
\item $\tau_1$ and $\tau_2$ are relatively prime.
\end{enumerate}
\end{lem}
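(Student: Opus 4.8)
The plan is to base everything on the factorization identity
$$\tau_1\tau_2 = (q^2+q+1)^2 - 2q(q+1)^2 = q^4-q^2+1 = \Phi_{12}(q),$$
which drops out of writing $\tau_1 = A+B$, $\tau_2 = A-B$ with $A = q^2+q+1$ and $B = \sqrt{2q}\,(q+1) = 2^{f+1}(q+1)$, together with the companion relations $\tau_1+\tau_2 = 2(q^2+q+1)$ and $\tau_1-\tau_2 = 2^{f+2}(q+1)$. The real content is a statement about the prime divisors of the $\tau_i$: I would first show that every prime $p$ dividing $\tau_1$ or $\tau_2$ satisfies $p\equiv 1\pmod{12}$, hence $p\ge 13$. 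Indeed, $p\mid\Phi_{12}(q)$ divides $q^{12}-1$, so $\operatorname{ord}_p(q)\mid 12$; moreover $\Phi_{12}(q)=q^4-q^2+1$ is odd (as $q$ is even), and since $q\equiv -1\pmod 3$ one checks $\Phi_{12}(q)\equiv 1\pmod 3$, so $p\nmid 6$. The standard cyclotomic criterion that $p\mid\Phi_n(q)$ with $p\nmid n$ forces $\operatorname{ord}_p(q)=n$ then yields $\operatorname{ord}_p(q)=12$, whence $12\mid p-1$.

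Granting this, part (i) is immediate: each $\tau_i$ is a product of primes $\equiv 1\pmod{12}$, so $\tau_i\equiv 1\pmod{12}$. (A self-contained alternative also works: for $f\ge 1$ every power of $2$ appearing in $\tau_i$ has exponent $\ge 2$, giving $\tau_i\equiv 1\pmod 4$, while reducing each power of $2$ modulo $3$ via $2\equiv -1$ and collecting terms gives $\tau_i\equiv 1\pmod 3$.) For part (iii), set $d=\gcd(\tau_1,\tau_2)$; since both $\tau_i$ are odd, $d$ is odd, and as $d$ divides $\tau_1+\tau_2 = 2(q^2+q+1)$ and $\tau_1-\tau_2 = 2^{f+2}(q+1)$ we get $d\mid q^2+q+1$ and $d\mid q+1$. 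But $(q^2+q+1)-q(q+1)=1$, so $d=1$.

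For part (ii) I would combine the prime-divisor structure with a crude size bound. Since $\tau_2<\tau_1$ and $\tau_2$ is increasing in $f$, we have $\min(\tau_1,\tau_2)=\tau_2\ge 793$ once $f\ge 2$. Using $\phi(\tau_i)\ge\phi(\operatorname{rad}(\tau_i))$ and the fact that every prime factor is $\ge 13$: if $\tau_i$ is a prime power $p^a$ then $\phi(\tau_i)=\tau_i(1-1/p)\ge (12/13)\cdot 793>156$, while if $\tau_i$ has two distinct prime factors $p<p'$ then $p\ge 13$ and $p'\ge 37$ (the two smallest primes $\equiv 1\pmod{12}$), so $\phi(\tau_i)\ge (p-1)(p'-1)\ge 12\cdot 36=432$. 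Either way $\phi(\tau_i)\ge 156$. The hypothesis $f\ge 2$ is essential, since for $f=1$ one has $\tau_1=109$ and $\tau_2=37$, both prime, with totients $108$ and $36$.

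The only genuine obstacle is the prime-divisor claim in the first paragraph, and within it the minor but necessary step of excluding $p=3$ (via $\Phi_{12}(q)\equiv 1\pmod 3$) before applying the cyclotomic order criterion; the remaining congruence and gcd computations are routine. Once the primes are pinned to $\equiv 1\pmod{12}$, all three assertions follow essentially mechanically, with part (ii) requiring only the short case split above.
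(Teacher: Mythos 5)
Your proposal is correct, and for parts (i) and (ii) it takes a genuinely different route from the paper's. The paper's proof is purely computational: for (i) it observes that $\tau_i-1$ is a sum of powers of $2$ with all exponents $\ge 2$ and checks divisibility by $3$ and $4$ directly (this is exactly your parenthetical ``self-contained alternative''); for (ii) it quotes the crude bound $\phi(n)\ge\sqrt{n}$ for $n>6$ (citing a handbook), which handles $f\ge 4$ since then $\tau_1\ge\tau_2\ge 246241>400^2$, and settles $f=2,3$ by explicitly computing $\phi(\tau_i)$ in all four cases; for (iii) it argues, much as you do, that a common prime divisor is odd and divides $\tau_1-\tau_2=2^{f+2}(q+1)$, hence divides $q+1$, then derives the contradiction from the rewriting $\tau_1=q^2+(q+1)(1+\sqrt{2q})$, which forces the prime to divide $q^2=2^{4f+2}$ --- a minor variant of your use of the sum $\tau_1+\tau_2=2(q^2+q+1)$ and the Euclidean identity $(q^2+q+1)-q(q+1)=1$. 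Your key departure is the identity $\tau_1\tau_2=\Phi_{12}(q)$ together with the cyclotomic order criterion; after the necessary (and correctly executed) exclusion of $p=2$ and $p=3$, it pins every prime divisor of $\tau_1\tau_2$ to $p\equiv 1\pmod{12}$, from which (i) is immediate and (ii) follows by your case split on whether $\tau_i$ is a prime power. What your approach buys: a stronger structural conclusion (the congruence holds for all prime divisors, not just for $\tau_i$ itself), a uniform treatment of (ii) with no appeal to $\phi(n)\ge\sqrt{n}$ and no hand computations at $f=2,3$, and a conceptual explanation of why these torus orders are coprime to $12$, which is the fact actually exploited in \S 4. What the paper's approach buys: brevity and complete elementarity, using nothing beyond binary expansions and a standard totient bound. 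The only loose ends in your writeup are the unproved (but immediate) monotonicity of $\tau_2$ in $f$ --- alternatively, $\tau_2>2^{4f+2}-2^{3f+2}\ge 2^{4f+1}\ge 512$ for $f\ge 2$ already suffices for your prime-power estimate $(12/13)\tau_2>156$ --- and the implicit inequality $\phi(n)\ge\prod_{p\mid n}(p-1)$ in the two-prime case, which is clear from multiplicativity; neither is a genuine gap.
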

\begin{proof}
\begin{enumerate}\renewcommand{\theenumi}{\it \roman{enumi}}
\item Observe that when $f \geq 1$,
\begin{align*}
   \tau_1-1 &= 2^{4f+2} + 2^{3f+2} + 2^{2f+1} + 2^{f+1}, \text{ and}\\
   \tau_2-1 &= 2^{4f+2} - 2^{3f+2} + 2^{2f+1} - 2^{f+1},
\end{align*}
are both divisible by $3$ and by $4$, hence $\tau_1-1 \equiv 0
\pmod{12}$ and $\tau_2-1 \equiv 0 \pmod{12}$.

\item It is well-known that $\phi(n) \geq \sqrt{n}$ for any $n>6$ (see~\cite{SMC}, pp. 9).
If $f \geq 4$ then $\tau_1 \geq \tau_2 \geq 246241 > 400^2$ and the
result immediately follows. For $f=2$, $\tau_1 = 1321$ and $\tau_2 =
793$, and so $\phi(\tau_1) = 1320$ and $\phi(\tau_2) = 720$. For
$f=3$, $\tau_1=18577$ and $\tau_2=14449$, and so $\phi(\tau_1) =
17136$ and $\phi(\tau_2) = 14448$.

\item Both $\tau_1$ and $\tau_2$ are odd, and if some odd prime divides both of them,
then it also divides $\tau_1-\tau_2=2\sqrt{2q}(q+1)=2^{f+2}(q+1)$,
and so it must divide $(q+1)$. Now, since
\[
   \tau_1 = q^2+(q+1)(1+\sqrt{2q}),
\]
such a prime also divides $q^2=2^{4f+2}$, yielding a contradiction.
\end{enumerate}
\end{proof}

Let $T$ be one of $T_1,T_2$ and set $\tau=|T|$.

Since $T$ is cyclic, there exists some $z \in T$ of exact order
$\tau$. Note that $z$ is regular and semisimple, and denote by $C_z$
the conjugacy class of $z$ in $G$. By Theorem~\ref{Mal},
\begin{align*}
|C_z \cap T| &= |\{t \in T: t=gzg^{-1} \text{ for some } g \in G\}|
\\ &= |\{gzg^{-1}: g\in N_G(T)\}| \leq [N_G(T):T] =12.
\end{align*}
Moreover, $|z|=\tau$, $|N_G(T)|=12\cdot \tau$ and $\gcd(\tau,12)=1$,
therefore $C_z \cap N_G(T) \subseteq (C_z \cap T)$ and so $|C_z \cap
N_G(T)| \leq 12$.

By Lemma~\ref{lem.t1.t2}{\it(ii)}, if $f \geq 2$ then $T$ contains
at least $156$ elements of exact order $\tau$, and each of them has
at most $12$ $G$-conjugates in $T$, thus one can find $13$ elements
$w_1,\dots,w_{13} \in T$ of exact order $\tau$, each of them
belonging to a different conjugacy class in $G$. These elements are
regular and semisimple, and we denote their corresponding conjugacy
classes by $C_{w_1},\dots,C_{w_{13}}$.

By Theorem~\ref{Gow}, one can write $z=x_jy_j$,where $x_j \in C_z$
and $y_j \in C_{w_j}$, for $1\leq j \leq 13$. Observe that all the
elements $y_1,\dots,y_{13}$ are necessarily distinct, as they belong
to distinct conjugacy classes. Therefore, all the elements
$x_1,\dots,x_{13}$ are also distinct.

Now, since $|C_z \cap N_G(T)| \leq 12$, there exists some $1\leq
j\leq 13$ such that $x_j \notin N_G(T)$. Since $T=\langle z \rangle$
and $N_G(T)$ is the only maximal subgroup of $G$ containing $T$ (by
Theorem~\ref{Mal}) it follows that $\langle x_j,z \rangle = \langle
x_j,y_j \rangle = G$.

\medskip

Applying the above argument for $T=T_1$ and $T=T_2$, we deduce that
whenever $f\geq 2$, there exist $x_1,y_1,z_1$ of exact order
$\tau_1$ and $x_2,y_2,z_2$ of exact order $\tau_2$ such that
\begin{align*}
& x_1y_1z_1 = 1 = x_2y_2z_2, \text{ and } \\
& \langle x_1, y_1 \rangle = G = \langle x_2, y_2 \rangle.
\end{align*}
One can verify that this statement also holds for $f=1$ using
the \textsc{Magma} computer program \cite{Magma}.

Moreover, by Lemma~\ref{lem.t1.t2}{\it(iii)}, $\tau_1$ and $\tau_2$
are relatively prime, therefore
\[
    \Sigma(x_1,y_1)\cap \Sigma(x_2,y_2) = \{e\},
\]
implying that $(x_1,y_1, x_2,y_2)$ is an unmixed Beauville structure
for the group $G={}^2F_4(2^{2f+1})$.



\begin{thebibliography}{LiSh2}

\bibitem[BCG]{BCG}
Bauer, Ingrid; Catanese, Fabrizio; Grunewald, Fritz:
Chebycheff and Belyi polynomials, dessins d'enfants, Beauville surfaces and group theory.
\textit{Mediterr.\ J.\ Math.} \textbf{3} (2006),  no.\ 2, 121--146.

\bibitem[Mag]{Magma}
Bosma, Wieb; Cannon, John; Playoust, Catherine:
The Magma algebra system. I. The user language. \textit{J. Symbolic Comput.}
\textbf{24} (1997) no.\ 3--4, 235--265

\bibitem[Car]{Car}
Carter, Roger W.:
Finite groups of Lie type. Conjugacy classes and complex characters. Pure and Applied Mathematics (New York). A Wiley-Interscience Publication. John Wiley \& Sons, Inc., New York, 1985.

\bibitem[Cat]{Cat}
Catanese, Fabrizio:
Fibred surfaces, varieties isogenous to a product and related moduli spaces.
\textit{Amer.\ J.\ Math.} \textbf{122} (2000),  no.\ 1, 1--44.

\bibitem[Con]{Con}
Conder, Marston:
An update on Hurwitz groups.
\textit{Groups, Complexity and Cryptology}, to appear.

\bibitem[Eve]{Eve} Everitt, Brent:
Alternating quotients of Fuchsian groups.
\textit{J.\ Algebra} \textbf{223} (2000),  no.\ 2, 457--476.

\bibitem[FuGo]{FG}
Fuertes, Yolanda; Gonz\'alez-Diez, Gabino:
On Beauville structures on the groups $S_n$ and $A_n$,
\textit{Math.\ Z.}, to appear.

\bibitem[FuJo]{FJ}
Fuertes, Yolanda; Jones, Gareth:
Beauville surfaces and finite groups,
arXiv:0910.5489.

\bibitem[GaPe]{GP}
Garion, Shelly; Penegini, Matteo:
New Beauville surfaces, moduli spaces, and finite groups, arXiv: 0910.5402v2

\bibitem[Gow]{Gow}
Gow, Rod:
Commutators in finite simple groups of Lie type,
\textit{Bull. London Math. Soc.} \textbf{32} (2000), no.\ 3, 311--315.

\bibitem[LaSe]{LaSe} V.\ Landazuri and G.M.\ Seitz,
On the minimal degrees of projective representations of the finite Chevalley groups,
\textit{J. Algebra} \textbf{32} (1974), 418--443.

\bibitem[LaLu]{LaLu}Larsen, Michael; Lubotzky, Alexander:
Representation varieties of Fuchsian groups, in preparation.

\bibitem[LMS]{LMS}
Liebeck, Martin W.; Martin, Benjamin M. S.; Shalev, Aner:
On conjugacy classes of maximal subgroups of finite simple groups, and a related zeta function.  \textit{Duke Math.\ J.} \textbf{128} (2005),  no.\ 3, 541--557.

\bibitem[LiSh1]{LiSh}
Liebeck, Martin W.; Shalev, Aner:
Fuchsian groups, finite simple groups and representation varieties.
\textit{Invent.\ Math.} \textbf{159} (2005),  no.\ 2, 317--367.

\bibitem[LiSh2]{LiSh2}
Liebeck, Martin W.; Shalev, Aner:
Character degrees and random walks in finite groups of Lie type.
\textit{Proc.\ London Math.\ Soc.\ (3)} \textbf{90} (2005), no.\ 1, 61--86.

\bibitem[Lus]{Lus}
Lusztig, George:
Characters of reductive groups over a finite field.
Annals of Mathematics Studies, 107.
Princeton University Press, Princeton, NJ, 1984.

\bibitem[Mal]{Mal}
Malle, Gunter:
The maximal subgroups of ${}^2F_4(q^2)$,
\textit{J. Algebra} \textbf{139} (1991), no.\ 1, 52--69.

\bibitem[Mar]{Mar}
Marion, Claude:
On the $(p_1,p_2,p_3)$-generation of finite groups of Lie type, preprint.

\bibitem[SMC]{SMC}
S\'andor, J\'ozsef;  Mitrinovi\'c, Dragoslav S.; Crstici, Borislav:
\textit{Handbook of Number Theory I}, Second printing of the 1996
original, Springer, Dordrecht, 2006.

\bibitem[Ser]{Ser}
Serre, Jean-Pierre:
Topics in Galois theory. Lecture notes prepared by Henri Darmon.
Research Notes in Mathematics, 1. Jones and Bartlett Publishers, Boston, MA, 1992.

\bibitem[Spr]{Spr}
Springer, T. A.:
Regular elements of finite reflection groups.
\textit{Invent.\ Math.} \textbf{25} (1974), 159--198.


\end{thebibliography}
\end{document}